\definecolor{darkgreen}{rgb}{0.,0.5,0.}
\DeclareMathOperator{\val}{val}
\DeclareMathOperator{\CF}{CF}
\DeclareMathOperator{\ff}{\mathbb{F}}
\DeclareMathOperator{\ve}{\varepsilon}
\DeclareMathOperator{\tr}{tr}
\numberwithin{equation}{section} \overfullrule 5pt
\newtheorem{thm}{Theorem}[section]
\newtheorem{coro}[thm]{Corollary}
\newtheorem{prop}[thm]{Proposition}
\newtheorem{lem}[thm]{Lemma}
\theoremstyle{definition}
\newtheorem{rk}{Remark}[section]
\newtheorem{eg}{Example}
\title{Algebraic automatic continued fractions  in characteristic $2$ II}
\date{}
\author{Yining Hu}
\address[Yining Hu]{School of Mathematics and Statistics, 
Huazhong University of Science and Technology, Wuhan, PR China}
\email{huyining@protonmail.com}
\thanks{
 This work is supported by the National Science Foundation
of China grant 12001216.
}
\begin{document}
\maketitle


\begin{abstract}
	We present two families of automatic sequences that define algebraic continued
	fractions in characteristic $2$. The period-doubling sequence belongs
	to the first first family $\mathcal{P}$; and its sum, the Thue-Morse 
	sequence, belongs to the second family $\mathcal{G}$.  The family 
	$\mathcal{G}$ contains all the iterated sums of 
	sequences from the $\mathcal{P}$ and more.
\end{abstract}

\section{Introduction}
In \cite{Hu2022H}, the author and Han considered continued fractions defined
by the Thue-Morse and the period-doubling sequence and proved their algebraicity
in characteristic $2$ for some cases.
In \cite{Bugeaud2022H}, Bugeaud and Han proved the algebraicity of Thue-Morse
continued fractions. In \cite{Hu2022L}, the author and Lasjaunias proved the
algebraicity of period-doubling continued fractions. In \cite{Hu2022}, the
author generalized \cite{Hu2022L}. In this article we give a generalization 
of all these results.

Let $\sigma$ denote the operator that maps a binary sequence $(u_n)_n$ to 
$(\sum_{j=0}^{n-1} u_j \pmod{2})_n$. 
Consider the period-doubling sequence 
$$\mathbf{p}=1,0,1,1,1,0,1,0,...$$
defined as the fixed point of the substitution $1\mapsto 10$, $0\mapsto 11$,
then
$$\sigma(\mathbf{p})= 0,1,1,0,1,0,0,1, \ldots$$ 
is the Thue-Morse sequence $\mathbf{t}$.
Therefore one way to generalize \cite{Hu2022H}, \cite{Bugeaud2022H} and
\cite{Hu2022L} is to consider continued fractions defined by 
$\sigma^n(\mathbf {p})$ for $n\geq 2$. 

On the other hand, in \cite{Hu2022}, the author considered the following 
generalization of the period-doubling sequence:
	 Given an ultimately periodic sequence $\boldsymbol \ve$, starting from the 
	 empty word $W_0$, define the sequence of words 
	 $(W_n)_{n\geq0}$ by setting $W_{n+1}=W_n,\varepsilon_n,W_n$ (note the comma 
	 is for concatenation and it will be omitted when it is suitable).  
	 It is proved
	 in \cite{Hu2022} that $\lim_n W_n$ then defines an algebraic continued
	 fraction in characteristic $2$.
Here we consider a more general case by allowing $W_0$ to be a non-empty finite
word.  We may then assume $\boldsymbol\ve$ to be periodic, because the preperiodic 
part can be absorbed by $W_0$. We let $\mathcal{P}(W_0, \boldsymbol\ve)$ denote
$\lim_n W_n$ and let $\mathcal{P}$ denote the family of such sequences.
The period-doubling sequence $\bf{p}$ is generated in this way by the empty word
and $\boldsymbol \ve =(10)^\infty$. 

Naturally we also want to consider sequences of the form $\sigma^n(\mathbf{u})$
where $n\geq 1$ and $\mathbf{u}\in \mathcal{P}$. It turns out more convenient
to consider yet a larger family $\mathcal{G}$ defined as follows:
Let $\boldsymbol\Upsilon$ be a periodic $(0,1)$-sequence. Let $u_0$ and $v_0$ be
two finite words. For $n\geq 0$, define
\begin{align*}
	u_{n+1} &= u_n, u_n\\
	v_{n+1} &= v_n, v_n
\end{align*}
if $\Upsilon_n=0$, and
\begin{align*}
	u_{n+1} &= u_n, v_n\\
	v_{n+1} &= v_n, u_n
\end{align*}
otherwise.
We let $\mathcal{G}(u_0,v_0, \boldsymbol \Upsilon)$ denote $\lim_n u_n$ and let 
$\mathcal{G}$ denote the family of such sequences.

In Section \ref{sec:pd2} we prove the algebraicity of continued fractions
defined by sequences from $\mathcal{P}$. A simpler proof
is given  in \cite{Hu2022} for the case where $W_0$ is the empty word.
In Section \ref{sec:pd2sum} we first give two examples, then prove the 
algebraicity of continued fractions defined by sequences from $\mathcal{G}$.
Finally we prove that for all $n\geq 1$ and all
binary sequences $\mathbf{u}\in \mathcal{P}$, $\sigma^n(\mathbf{u})\in \mathcal{G}$.

\subsection{Continued Fractions in Power Series Rings}\label{ss:1}
We adopt the definition of continued fractions and notion of convergence 
as in \cite{Hu2022}, which we recall in this subsection for convenience.

Let $A=\{a_0,\ldots, a_k\}$ be a finite alphabet. We treat $a_j$ as formal
variables and define
\begin{align*}
	\ff_2[A]:=\ff_2[a_0,\ldots, a_k]\\
	\ff_2(A):=\ff_2(a_0,\ldots, a_k)\\
	\ff_2((A)):=\ff_2((\frac{1}{a_0},\ldots, \frac{1}{a_k})).
\end{align*}
Here	$\ff_2((\frac{1}{a_0},\ldots, \frac{1}{a_k}))$ denotes the ring of
power series of the form
\begin{equation}
\varphi=\sum_{n_0, \ldots, n_k \geq N} c_{n_0,n_1,\ldots, n_k} a_0^{-n_0}\cdots {a_k}^{-n_k},\label{eq:star}\end{equation}
where $N$ is an integer and $c_{n_0,n_1,\ldots, n_k}\in \ff_2$.

Let $(u_n)_{n\geq 0}$ be a sequence taking values in $A$. It defines a
formal power series $\sum_{n\geq 0} u_n z^n$ in $\ff_2[A][[z]]$.

We define a norm on $\ff_2((A))$ by assigning a series of the form 
\eqref{eq:star}
the number $2^{-m}$, where $m=\min\{ n_0+n_1 \cdots+ n_k \mid \ c_{n_0,n_1,
\ldots, n_k}\neq 0\}$ (with the convention that $\min\emptyset=\infty$)
is the {\it valuation} of the series $\varphi$ and is denoted by $\val(\varphi)$.
This norm makes  $\ff_2((A))$ an ultrametric space.

The continued fraction $\CF(\mathbf{u})=[u_0,u_1,\ldots]$ is defined as 
the limit of the sequence $([u_0,u_1,\ldots, u_n])_n$:
\begin{align*}
	[u_0]&=u_0,\\
	[u_0, u_1, \ldots, u_{n}]&=u_0+\frac{1}{[u_1,\ldots, u_n]}\in \ff_2((A)),
\end{align*}
for $n\geq 1$. 
For example, 
\begin{align*}
	[u_0,u_1,u_2]&=u_0+\cfrac{1}{u_1+\frac{1}{u_2}}\\
	&=u_0+\frac{u_1^{-1}}{1+(u_1u_2)^{-1}}\\
	&=u_0+u_1^{-1}+u_1^{-2}u_2^{-1}+u_1^{-3}u_2^{-2}+\cdots
\end{align*}
Define
\begin{equation}\label{eq:cfm}
M_n=
\begin{pmatrix} 1& \frac{1}{u_{n}}\\ \frac{1}{u_{n}}&0\end{pmatrix}
\begin{pmatrix} 1& \frac{1}{u_{n-1}}\\ \frac{1}{u_{n-1}}&0\end{pmatrix}
	\cdots
\begin{pmatrix} 1& \frac{1}{u_{0}}\\ \frac{1}{u_{0}}&0\end{pmatrix}
\end{equation}
	then 
	\begin{equation}\label{eq:cfmn}
	 [u_0, u_1, \ldots, u_{n}] =\frac{M_{n,0,1}}{M_{n,0,0}}.
	\end{equation}
	In general, we do not have the convergence of $(M_{n,0,1})_n$ and
  $(M_{n,0,0})_n$, but we do have the convergence of 
$\left(\frac{M_{n,0,1}}{M_{n,0,0}}\right)_n$, which is proved in the same way
as in the case of classical contineud fraction for real numbers.

We may also choose to specialize the letters in $A$ to non-constant polynomials
in $\ff_2[z]$ (this gives a continuous map from $\ff_2((A))$ to $\ff_2((1/z))$),
and then $\CF({\mathbf{u}})$ can be seen as formal power series in $\ff_2((1/z))$.
	 For basic information on continued fractions, particularly in power series rings, the reader is refered to \cite{Lasjaunias2017}.


\subsection{Main results}\label{ssec:thm}
Let $\bf{u}$ be a sequence taking values in a finite alphabet $A$.
\begin{thm}\label{thm:pd2}
	If $\mathbf{u}=\mathcal{P}(W_0, \boldsymbol\ve)$ is a sequence in $\mathcal{P}$, 
	where $\boldsymbol \ve$ has period $n$, then $\CF(\mathbf{u})$ is algebraic
	over $\ff_2({A})$ of degree at most $2^n$.
\end{thm}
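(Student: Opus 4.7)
The plan is to translate the word recurrence $W_{k+1}=W_k\,\varepsilon_k\,W_k$ into a matrix recurrence, exploit the periodicity of $\boldsymbol\varepsilon$ to close the system after $n$ iterations, and extract a polynomial equation for $\alpha$ whose degree reflects the doubling that occurs at each of the $n$ iterations.

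First I would set $P_k=M_{W_k}$, the matrix product from \eqref{eq:cfm}. The word recurrence gives directly the matrix recurrence $P_{k+1}=P_k\,E_k\,P_k$, where $E_k$ is the single-letter matrix associated to $\varepsilon_k$; consequently the entries of $P_{k+1}$ are homogeneous quadratic forms in those of $P_k$. By \eqref{eq:cfmn}, $\CF(W_k)$ is a ratio of entries of $P_k$, and this ratio converges to $\alpha=\CF(\mathbf{u})$ in $\ff_2((A))$ as $k\to\infty$.

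Next, the periodicity $\varepsilon_{k+n}=\varepsilon_k$ gives $E_{k+n}=E_k$, so iterating the matrix recurrence $n$ times yields $P_{k+n}=F(P_k)$ for a single polynomial map $F$, independent of $k$, whose components are homogeneous of total degree $2^n$ in the entries of $P_k$ with coefficients in $\ff_2(\varepsilon_0,\ldots,\varepsilon_{n-1})$. This doubling $n$ times is where the bound $2^n$ originates.

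Finally, I would pass to the projective limit. Both ``column ratios'' of $P_k$ converge to the same value $\alpha$, so up to an overall scalar $P_k$ approaches a rank-one matrix with column direction $(\alpha,1)^T$ and a row direction given by an auxiliary projective coordinate $[x_k:y_k]$. Substituting this rank-one ansatz into $P_{k+n}=F(P_k)$ and comparing both columns yields polynomial relations in $\alpha$ together with $[x_k:y_k]$; eliminating the auxiliary coordinate by a resultant produces a polynomial equation in $\alpha$ alone of degree at most $2^n$ over $\ff_2(A)$, which is exactly the desired algebraicity statement.

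The main obstacle I expect is this elimination step: the auxiliary ratio $[x_k:y_k]$ does not in general converge (typically it oscillates with $k$, since the denominators of the convergents do not stabilize projectively along rows), so the argument must be carried out algebraically at the level of the identity $P_{k+n}=F(P_k)$ rather than by taking naive limits. A further subtlety is characteristic-two cancellation inside $F$: many cross-terms vanish and the polynomial extracted for $\alpha$ may factor, so one has to verify that the bound $2^n$ survives the reduction and is not inflated by spurious factors or destroyed by trivial relations.
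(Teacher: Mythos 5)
Your setup (the matrix recurrence $P_{k+1}=P_kE_kP_k$, the iterated map $P_{k+n}=F(P_k)$ homogeneous of degree $2^n$, and the observation that the limit matrix is rank one) matches the paper's starting point, but the mechanism you propose for extracting the algebraic equation fails, and the idea that actually drives the proof is absent. For finite $k$ the identity $P_{k+n}=F(P_k)$ is simply the definition of $P_{k+n}$ and carries no constraint; the only genuine equation available is the limiting fixed-point relation $P_\infty=F(P_\infty)$ together with $\det P_\infty=0$. On rank-one matrices $P=uv^{T}$ the map $F$ acts by
$$F(P)=\Bigl(\textstyle\prod_{j} v^{T}E_{i_j}u\Bigr)\cdot P,$$
a scalar multiple of $P$, so the fixed-point condition is a single scalar equation on the three-dimensional family of rank-one matrices. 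Its solution set is two-dimensional, and since $\alpha$ depends only on the row direction of $P_\infty$ while the constraint couples it to the free column direction, the projection of this set to the $\alpha$-line is dominant: eliminating the auxiliary coordinate $[x_k:y_k]$ by a resultant returns the zero polynomial and yields no relation for $\alpha$. Even if the fixed-point variety were zero-dimensional, B\'ezout would only give a bound of order $2^{4n}$, not $2^n$.

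What is missing is a linearization of the quadratic recursion, which is the paper's Lemma \ref{pd2:lem1}: with $b_j=\begin{pmatrix}0&1/\varepsilon_j\\ 1/\varepsilon_j&1\end{pmatrix}$, $d_j=\det(m_j)$ and explicit scalars $L_j$, one proves
$$m_k=L_k\Bigl(m_0+\sum_{j<k} d_j b_j/L_{j+1}\Bigr),$$
so that every $m_k$ lies in the fixed span of $m_0,b_0,\dots,b_{n-1}$ with \emph{scalar} coefficients. Algebraicity then comes not from elimination but from Frobenius-semilinear functional equations satisfied by the limiting scalar series: $f=\lim_k L_{kn}$ satisfies $f^{2^n}=f/L_n$, and $H_0=\sum_k d_{kn}/L_{kn+1}$ satisfies $\lambda H_0^{2^n}/L_1=H_0+d_0/L_1$, an additive polynomial of degree $2^n$ whose root generates $\ff_2(A)[H_0]$ containing all the remaining coefficients; this is precisely where the bound $2^n$, rather than some larger power, comes from. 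Without this decomposition (or an equivalent one) your argument cannot be completed.
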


\begin{thm}\label{thm:pd2sum}
	If $\mathbf{u}=\mathcal{G}(u_0,v_0, \boldsymbol\Upsilon)$ is a sequence in 
	$\mathcal{G}$, where $\boldsymbol \Upsilon$ has period $k$, and contains an
	even number of $1$'s in one period, then $\CF(\mathbf{u})$ is algebraic
	over $\ff_2({A})$ of degree at most $2^k$.
\end{thm}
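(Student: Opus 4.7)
The plan is to generalize the matrix argument underlying Theorem~\ref{thm:pd2} (for the family $\mathcal{P}$) to the two-sequence setup of $\mathcal{G}$. Set $P_n := M_{u_n}$ and $Q_n := M_{v_n}$, the CF matrix products attached by \eqref{eq:cfm} to the prefixes $u_n, v_n$. Rescaling so as to work with $\mathrm{SL}_2(\ff_2[A])$ matrices built from $N(a) := \begin{pmatrix} a & 1 \\ 1 & 0 \end{pmatrix}$ does not affect the M\"obius action and has the advantage of making Cayley--Hamilton read $M^2 = \tr(M)\,M + I$ throughout. The recursion defining $\mathcal{G}$ then yields $(P_{n+1}, Q_{n+1}) = (P_n^2, Q_n^2)$ when $\Upsilon_n = 0$, and $(P_{n+1}, Q_{n+1}) = (Q_n P_n, P_n Q_n)$ when $\Upsilon_n = 1$ (the orders of multiplication following the convention of \eqref{eq:cfm}). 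Iterating one full period gives noncommutative length-$2^k$ monomials $\Phi, \Psi$ in two variables such that $P_k = \Phi(P_0, Q_0)$ and $Q_k = \Psi(P_0, Q_0)$.

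The combinatorial input I would establish first is a palindromicity lemma. Let $\ell(n)$ denote the number of $1$'s among $\Upsilon_0, \ldots, \Upsilon_{n-1}$. A short induction on $n$, covering the four sub-cases according to $\Upsilon_n$ and the parity of $\ell(n)$, shows that, viewed as elements of the free monoid $\{u_0, v_0\}^*$, the words $u_n$ and $v_n$ are both palindromes whenever $\ell(n)$ is even, while $v_n$ equals the formal reverse $u_n^R$ whenever $\ell(n)$ is odd. The even-$1$'s hypothesis of the theorem is exactly $\ell(k)$ even, so $u_k$ is a palindrome in $\{u_0, v_0\}^*$; consequently $\Phi(P_0, Q_0) = A_1 A_2 \cdots A_{2^k}$ with $A_i = A_{2^k+1-i} \in \{P_0, Q_0\}$, and similarly for $\Psi$.

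Writing $\alpha := \CF(\mathbf{u})$ and $\beta := \CF(\mathbf{v})$, the M\"obius identities $\alpha = P_n \cdot \alpha_n$ and $\beta = Q_n \cdot \beta_n$ hold for every $n$, where $\alpha_n, \beta_n$ are the corresponding tail CFs. Applied at $n = k$ and combined with the palindromic factorization of $P_k$, I would peel off matched outer factors $A_1 = A_{2^k}$ and then fold the resulting squares via $M^2 = \tr(M) M + I$, collapsing the depth-$k$ nested palindrome layer by layer. Because the Frobenius $x \mapsto x^{2^k}$ is an $\ff_2(A)$-algebra endomorphism, each round of squaring contributes a pure Frobenius factor, and the palindromic symmetry is precisely what forces the $\beta$-dependent cross terms (which would otherwise appear through the $Q_0$ factors of $\Phi$) to cancel. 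The result is a single-variable polynomial identity of the form $\alpha^{2^k} = F(\alpha)$ with $F \in \ff_2(A)[X]$ of degree at most $2^k - 1$, which yields an annihilating polynomial for $\alpha$ of degree at most $2^k$.

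The main obstacle will be verifying that this iterated Cayley--Hamilton reduction collapses cleanly to a single-variable relation in $\alpha$, as opposed to a coupled system in $(\alpha, \beta^{2^k})$ or to a naive M\"obius-type equation $\alpha = M \cdot \alpha^{2^k}$ of degree $2^k + 1$. I would sanity-check the mechanism in the base cases $k = 1$, $\boldsymbol\Upsilon = (0)$ (where $\mathbf{u} = u_0^\infty$ and one recovers the standard degree-$2$ equation of the purely periodic CF $[u_0, u_0, \ldots]$) and $k = 2$, $\boldsymbol\Upsilon = (1,1)$ (recovering the degree-$4$ Thue--Morse equation of Bugeaud--Han), and then induct on $k$ exploiting the natural nested structure of a length-$2^k$ palindrome. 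The even-$1$'s hypothesis intervenes at the precise step where the $u \leftrightarrow v$ involution has to be respected for the elimination to close; without it the same computation produces an unresolvable coupling between $\alpha$ and $\beta^{2^k}$, and one would need additional arguments to reduce to a single-variable equation.
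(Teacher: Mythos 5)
Your combinatorial observation is correct and does capture why the parity hypothesis matters: an easy induction shows that $u_n$ and $v_n$ are palindromes over the block alphabet $\{u_0,v_0\}$ exactly when $\Upsilon_0\cdots\Upsilon_{n-1}$ contains an even number of $1$'s, and that $v_n$ is the block-reversal of $u_n$ otherwise. But the proposal stops precisely where the real work begins. The entire content of the theorem is an explicit collapse identity (the paper's Proposition \ref{prop:main}), which writes $m_{1s}$ in closed form as $(m_1+c_1+\cdots+c_k)\cdot d^{2^{k-1}}/c_k$ with the $c_j$ built from $d=\det(m_1)$, $r=\tr(m_1)$ and $co=m_1+w_1+r$, proved by a delicate induction on $s$ using the conjugation relation $co\cdot m_1=w_1\cdot co$ together with Cayley--Hamilton; everything after that (self-similarity of the $c_j$ under one period, convergence, the Frobenius equations for $f$ and $H_1$) follows mechanically. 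You explicitly defer this step (``the main obstacle will be verifying that this iterated Cayley--Hamilton reduction collapses cleanly''), so the argument is missing its core.

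Beyond the omission, two points in the sketch would fail as stated. First, $P_0$ and $Q_0$ are products of symmetric letter-matrices but are not themselves symmetric, so block-palindromicity of $u_k$ does not make $P_k$ a symmetric matrix; and ``peeling off matched outer factors'' from $\alpha=P_k\cdot\alpha_k$ replaces $\alpha$ by $A_1^{-1}\cdot\alpha$, which is not the continued fraction of anything with exploitable structure. The squares that Cayley--Hamilton can fold sit in the \emph{middle} of the palindrome, and one application turns a single word into a sum of two shorter words rather than a shorter palindrome, so the layer-by-layer collapse must be organized far more carefully --- this bookkeeping is exactly what the quantities $c_j$, $r$, $co$ accomplish. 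Second, ``each round of squaring contributes a pure Frobenius factor'' conflates matrix squaring with entrywise Frobenius: in characteristic $2$ one has $(M^2)_{0,0}=M_{0,0}^2+M_{0,1}M_{1,0}$, and tracking these cross terms is the proof, not a formality. Finally, the tail $\alpha_k$ genuinely involves the $v$-blocks, so your route leads to the coupled system in $(\alpha,\beta)$ that you yourself flag; the paper sidesteps this entirely by proving entrywise convergence of the matrices $m_{1s^i}$ to $f\cdot(m_1+\sum_j H_j)$, with $f$ and $H_1$ satisfying explicit degree-$2^k$ Frobenius equations, and never produces (nor needs) an equation of the form $\alpha^{2^k}=F(\alpha)$.
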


\begin{coro}\label{thm:coro}
	Let $\mathbf{u}=\mathcal{P}(W_0, \boldsymbol\ve)$ be a binary sequence in 
	$\mathcal{P}$, where $\boldsymbol \ve$ has period $n$.
	Then for all $k\geq 1$, 
	$\CF(\sigma^k(\mathbf{u}))$ is 
	algebraic over $\ff_2({A})$ of degree at most $2^n$.
\end{coro}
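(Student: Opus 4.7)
The plan is to combine Theorem \ref{thm:pd2sum} with the embedding $\sigma^k(\mathcal{P})\subseteq\mathcal{G}$ promised in the introduction (and established at the end of Section \ref{sec:pd2sum}). That embedding will provide, for each $\mathbf{u}=\mathcal{P}(W_0,\boldsymbol\ve)\in\mathcal{P}$ and each $k\geq 1$, finite words $u_0',v_0'$ and a periodic $(0,1)$-sequence $\boldsymbol\Upsilon$ with $\sigma^k(\mathbf{u})=\mathcal{G}(u_0',v_0',\boldsymbol\Upsilon)$. Once we know that $\boldsymbol\Upsilon$ can be chosen of period dividing $n$ and with an even number of $1$'s in one period, Theorem \ref{thm:pd2sum} directly yields the algebraicity of $\CF(\sigma^k(\mathbf{u}))$ over $\ff_2(A)$ of degree at most $2^n$.

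For the period bound, I would inspect the construction of the embedding level by level. The $\mathcal{P}$-recursion $W_{m+1}=W_m,\ve_m,W_m$ and the operator $\sigma$ are both synchronous with respect to the doubling filtration, so the $\mathcal{G}$-driving entry $\Upsilon_m$ should depend only on $\ve_m$ together with a bounded amount of auxiliary parity data (such as $|W_m|\bmod 2$) that is itself eventually periodic in $m$ with period dividing $n$. Hence $\boldsymbol\Upsilon$ is $n$-periodic up to a preperiodic head, and that head can be absorbed into the initial words $u_0'$ and $v_0'$.

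The main obstacle will be the parity condition: one must arrange $\sum_{m=0}^{n-1}\Upsilon_m\equiv 0\pmod 2$ in one period of length exactly $n$, since settling for the doubled period $2n$ would only produce the weaker bound $2^{2n}$. I would derive an explicit $\ff_2$-linear formula for $\Upsilon_m$ in terms of $\ve_m$ and the slowly varying bit $|W_m|\bmod 2$, then sum over one period. The recursion $|W_{m+1}|=2|W_m|+1$ makes $|W_m|\bmod 2$ eventually constant, while the $n$-periodicity of $\boldsymbol\ve$ cancels the $\ve_m$-contributions modulo $2$ over one period; these two facts should combine to give $\sum_{m=0}^{n-1}\Upsilon_m\equiv 0\pmod 2$. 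With this parity check in hand, iterating the argument across the $k$ applications of $\sigma$ (each of which preserves both the period and the even-ones property, by the same synchronous analysis), the corollary follows at once from Theorem \ref{thm:pd2sum}.
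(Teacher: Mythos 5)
Your overall route is the same as the paper's: realize $\sigma(\mathbf{u})$ as an element of $\mathcal{G}$, check that the driving sequence $\boldsymbol\Upsilon$ is periodic of period (dividing) $n$ with an even number of $1$'s per period, note that further applications of $\sigma$ stay inside $\mathcal{G}$ (the paper's Proposition \ref{prop:sumg} shows $\sigma$ merely shifts $\boldsymbol\Upsilon$, so period and parity are trivially preserved), and then invoke Theorem \ref{thm:pd2sum}. The problem is in the one step you single out as the main obstacle: your proposed verification of the parity condition would not go through. You posit that $\Upsilon_m$ is an $\ff_2$-linear function of $\ve_m$ and an eventually constant auxiliary bit such as $|W_m|\bmod 2$, and that ``the $n$-periodicity of $\boldsymbol\ve$ cancels the $\ve_m$-contributions modulo $2$ over one period.'' But a sum of the form $\sum_{m=0}^{n-1}\ve_m$ is not forced to vanish mod $2$ by periodicity --- for $\boldsymbol\ve=(10)^\infty$ it equals $1$ --- and adding $n$ copies of an eventually constant bit does not fix this. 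So with a formula of the shape you describe, the parity check fails, and you would be stuck with period $2n$ and the weaker bound $2^{2n}$ that you explicitly want to avoid.

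The correct mechanism (Proposition \ref{prop:sump}) is that $\Upsilon_m=1-\delta_{\ve_m,\ve_{m+1}}$, i.e.\ $\Upsilon_m\equiv\ve_m+\ve_{m+1}\pmod 2$: the relevant auxiliary datum at stage $m+1$ is the last bit of $\sigma(W_{m+1})$, which equals the parity of $\ve_m$ and is therefore \emph{not} eventually constant --- it cycles with $\boldsymbol\ve$. The parity condition then holds because $\sum_{m=0}^{n-1}(\ve_m+\ve_{m+1})$ telescopes to $\ve_0+\ve_n=0$; equivalently, a periodic binary sequence changes value an even number of times over one full period. This two-index (telescoping) structure, rather than cancellation of single-index contributions, is the idea your sketch is missing; once it is supplied, the rest of your plan (absorbing the preperiodic head into $u_0',v_0'$ and iterating $\sigma$) matches the paper's argument.
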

\begin{rk}
	When we apply $\sigma$ to a binary sequence, we regard its terms as $0$'s
	and $1$'s; when we consider the continued fraction that it defines, we
	regard its terms as formal variables.
\end{rk}
\begin{rk}
	In inverse of $\sigma$ maps a sequence $(u_n)_n$ to 
	$(u_{n+1}-u_n)_n$. 
	Then $\mathbf{p}=\sigma^{-1}(\mathbf{t})$. We could continue to consider
	$$\sigma^{-1}(\mathbf{p})=1,1,0,0,1,1,1,1,1,1,0,0,1,1,0,0,\ldots$$
	However, the continued fraction defined by $\sigma^{-1}
	(\mathbf{p})$ does not seem to be algebraic. 
	The sequence $\sigma^{-1}(\mathbf{p})$ can also be obtained by replacing 
	$1$ by $11$ and $0$ by $00$ in $\mathbf{p}$.  On the other hand, by 
	Theorem \ref{thm:pd2sum}, if we replace $0$ and $1$ in the Thue-Morse
	sequence by finite words $u_0$ and $v_0$, we always obtain a sequence that 
	defines an algebraic continued fraction.
\end{rk}

\section{The family $\mathcal{P}$} \label{sec:pd2}
In this section we prove Theorem \ref{thm:pd2}.

Let $W_0$ be a finite word and let $\boldsymbol\ve$ be a periodic sequence
of period $n$, 
both taking values in a finite alphabet $A$.

Let $l$ be the length of $W_0$. By \eqref{eq:cfm} and \eqref{eq:cfmn}, 
if we define
\begin{equation}
m_0=
\begin{pmatrix} 1& \frac{1}{W_{l-1}}\\ \frac{1}{W_{l-1}}&0\end{pmatrix}
\begin{pmatrix} 1& \frac{1}{W_{l-2}}\\ \frac{1}{W_{l-2}}&0\end{pmatrix}
	\cdots
\begin{pmatrix} 1& \frac{1}{W_{0}}\\ \frac{1}{W_{0}}&0\end{pmatrix}
\end{equation}
and 
	$$m_{n+1}=m_n \begin{pmatrix} 1& 1/\ve_n \\1/\ve_n&0\end{pmatrix} m_n,$$
		Then
		$$\CF(\mathcal{P}(W_0,\boldsymbol\ve))=\lim_n \frac{m_{n,0,1}}{m_{n,0,0}}.$$

Define
	for $n\geq 0$,
$$b_n=\begin{pmatrix} 0& 1/\ve_n \\1/\ve_n&1\end{pmatrix},$$
	$$d_n=\det(m_n),$$
$$	l_n=(m_{n,0,1}+m_{n,1,0})/\varepsilon_n +m_{n,0,0}.$$
Define $L_0=1$, and $L_{n+1}=L_n \cdot l_n$ for $n\geq 0$.

Lemma \ref{pd2:lem1} and \ref{pd2:lem2} does not use the structure of $m_0$,
and is true for any generic $2\times 2$ matrix in place of $m_0$.
Lemma \ref{pd2:lem1} does not use the periodicity of $\boldsymbol\ve$.
	\begin{lem}\label{pd2:lem1}
		For $n\geq 1$,
		\begin{equation}\label{eq:lem1}
			m_n=L_n\cdot (m_0 + d_0 b_0/L_1 + d_1b_1/L_2\cdots + d_{n-1} b_{n-1}/L_n) 
		\end{equation}
	\end{lem}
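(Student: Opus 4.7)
The plan is to prove the formula by induction on $n$, with the core of the argument being a single matrix identity that expresses $m_{n+1}$ linearly in terms of $m_n$ and a rank-one correction.

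\textbf{Key identity.} First I would establish that for any $2\times 2$ matrix $M=\begin{pmatrix}a&b\\c&d\end{pmatrix}$ over a field of characteristic $2$, and any non-zero scalar $\ve$, one has
\begin{equation*}
	M \begin{pmatrix} 1 & 1/\ve \\ 1/\ve & 0 \end{pmatrix} M = \left(\tfrac{b+c}{\ve}+a\right) M + \det(M)\begin{pmatrix} 0 & 1/\ve \\ 1/\ve & 1 \end{pmatrix}.
\end{equation*}
Applying this with $M=m_n$ gives precisely
\begin{equation*}
	m_{n+1} = m_n \begin{pmatrix} 1 & 1/\ve_n \\ 1/\ve_n & 0 \end{pmatrix} m_n = l_n\, m_n + d_n\, b_n.
\end{equation*}
The identity is checked by a direct $2\times 2$ multiplication; the characteristic-$2$ hypothesis enters exactly when terms of the form $bc/\ve_n$ (on the $(0,1)$ and $(1,0)$ entries) and $ad$ (on the $(1,1)$ entry) appear twice and must cancel. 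This step is the only genuine calculation, and it is the main (small) obstacle.

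\textbf{Induction.} With this identity in hand the proof of \eqref{eq:lem1} is immediate. For $n=1$, since $L_0=1$ and $L_1=l_0$, the identity gives $m_1 = l_0\, m_0 + d_0\, b_0 = L_1(m_0 + d_0 b_0/L_1)$, which is exactly \eqref{eq:lem1}. Assuming the formula for $n$, write
\begin{equation*}
	m_{n+1} = l_n m_n + d_n b_n = l_n L_n\!\left(m_0 + \sum_{k=0}^{n-1} \tfrac{d_k b_k}{L_{k+1}}\right) + d_n b_n.
\end{equation*}
Using $L_{n+1}=L_n l_n$, factor $L_{n+1}$ out of both summands to land on
\begin{equation*}
	m_{n+1} = L_{n+1}\!\left(m_0 + \sum_{k=0}^{n} \tfrac{d_k b_k}{L_{k+1}}\right),
\end{equation*}
which is \eqref{eq:lem1} at index $n+1$. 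Note that at no point do we use the periodicity of $\boldsymbol \ve$ or the explicit form of $m_0$, consistent with the remark preceding the lemma.
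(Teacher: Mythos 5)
Your proof is correct. The one genuine computation — the rank-one update identity
\begin{equation*}
	M \begin{pmatrix} 1 & 1/\ve \\ 1/\ve & 0 \end{pmatrix} M \;=\; \Bigl(\tfrac{b+c}{\ve}+a\Bigr) M + \det(M)\begin{pmatrix} 0 & 1/\ve \\ 1/\ve & 1 \end{pmatrix}
\end{equation*}
for $M=\left(\begin{smallmatrix}a&b\\c&d\end{smallmatrix}\right)$ in characteristic $2$ — checks out entry by entry (the cross terms $bc/\ve$ in the off-diagonal entries and $ad$ in the $(1,1)$ entry each appear twice and cancel, exactly as you say), and it yields $m_{n+1}=l_n m_n+d_n b_n$, after which the induction is mechanical. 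The paper uses the very same identity, but only instantiates it once, as the base case $m_1=L_1 m_0+d_0 b_0$; its inductive step is organized differently, by a shift argument: set $m_0'=m_1$, $\ve_j'=\ve_{j+1}$, apply the induction hypothesis to the primed system (this is legitimate precisely because, as you note, the formula holds for an arbitrary starting matrix), and then substitute the base-case expansion of $m_1$. Your version appends the new term at the tail of the sum; the paper's prepends at the head. Yours is the more direct and self-contained induction; the paper's shift bookkeeping ($m_0'$, $l_n'$, $L_n'$, etc.) earns its keep because the same primed-system device is reused in Lemma 2.2 and throughout Section 3, so the author sets it up here once. Either way the content is identical and neither the periodicity of $\boldsymbol\ve$ nor the specific form of $m_0$ is used, consistent with the remark preceding the lemma.
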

	\begin{proof}
		By induction. The formula is true for $n=1$:
		$$m_1=L_1 m_0 +d_0b_0.$$
		Suppose the formula is true 
		for $n$. 
		Let $m_0'= m_1$, $\ve_n'=\ve_{n+1}$, and define $m_n'$, etc.  Then $l'_n
		=l_{n+1}$, $d'_n=d_{n+1}$, $b'_n=b_{n+1}$, and $L_n'= L_n \cdot l_n/l_0
		=L_{n+1}/l_0=L_{n+1}/L_1$.
    By the induction hypothesis,
		\begin{align*}
			&\quad \; m_{n+1}\\
			&=m_{n}'\\
			&=L_n'\cdot (m_0' + d_0' b_0'/L_1' + d_1' b_1' /L_2' \cdots + d_{n-1}' b_{n-1}'/L_n')      \\
			&=L_n'\cdot (m_1 + d_1 b_1/L_1' + d_2 b_2 /L_2' \cdots + 
			d_{n} b_{n}/L_n')      \\
			&=L_n'\cdot (L_1\cdot m_0+ d_0b_0 + d_1 b_1/L_1' + d_2 b_2 /L_2' \cdots + 
			d_{n} b_{n}/L_n')      \\
			&=L_{n+1}\cdot (m_0+ d_0b_0/L_1 + d_1 b_1/L_2 + d_2 b_2 /L_3 \cdots + d_{n} b_{n}/L_{n+1})      \qedhere
		\end{align*}
	\end{proof}

	\begin{lem}\label{pd2:lem2}
		i) For all $j\geq 0$, 
		$$l_{n+j}=L_n^{2^j}l_j.$$
		ii) For all $j\geq 0$, 
		\begin{equation*}
		L_{n+j}=L_n^{2^j} L_j.
		\end{equation*}
	\end{lem}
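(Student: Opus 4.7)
The plan is to derive both (i) and (ii) from a single identity supplied by Lemma~\ref{pd2:lem1}. The key observation is that each matrix $b_k = \begin{pmatrix} 0 & 1/\ve_k \\ 1/\ve_k & 1 \end{pmatrix}$ has vanishing $(0,0)$-entry and identical off-diagonal entries. Reading off \eqref{eq:lem1} at position $(0,0)$ immediately gives $m_{k,0,0} = L_k \cdot m_{0,0,0}$, while adding the $(0,1)$ and $(1,0)$ entries causes all the $2 d_i/(\ve_i L_{i+1})$ cross-terms to vanish in characteristic $2$, yielding $m_{k,0,1} + m_{k,1,0} = L_k \cdot (m_{0,0,1} + m_{0,1,0})$. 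Plugging these into the definition of $l_k$, I obtain $l_k = L_k \cdot f(\ve_k)$ where $f(x) := (m_{0,0,1} + m_{0,1,0})/x + m_{0,0,0}$ depends on $k$ only through the letter $\ve_k$.

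Next I would invoke the periodicity of $\boldsymbol\ve$: since $\ve_{n+j} = \ve_j$, we have $f(\ve_{n+j}) = f(\ve_j)$, and therefore the crucial identity $l_{n+j}\cdot L_j = l_j \cdot L_{n+j}$. This makes (i) and (ii) equivalent at each fixed $j$, so it suffices to prove (ii) by induction on $j$.

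The base case $j = 0$ is trivial. For the inductive step, the hypothesis $L_{n+j} = L_n^{2^j} L_j$ combined with the above identity immediately gives (i) at $j$, i.e.\ $l_{n+j} = L_n^{2^j} l_j$, and then
\[
L_{n+j+1} = L_{n+j}\cdot l_{n+j} = \bigl(L_n^{2^j} L_j\bigr)\bigl(L_n^{2^j} l_j\bigr) = L_n^{2^{j+1}} L_{j+1},
\]
establishing (ii) at $j+1$. The only things to watch for are the characteristic-$2$ cancellation in the off-diagonal sum and the correct moment to invoke periodicity; once those are in place, the induction runs on autopilot and I anticipate no further obstacle.
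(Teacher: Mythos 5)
Your proof is correct, and it reorganizes the argument in a way that is genuinely lighter than the paper's. Both proofs rest on the same two pillars: the structure of $b_i$ (zero $(0,0)$-entry, equal off-diagonal entries that cancel in characteristic $2$ when summed) applied to the expansion \eqref{eq:lem1}, and the periodicity $\ve_{n+j}=\ve_j$. The paper, however, only extracts from this the single base case $l_n = L_n l_0$, and then proves part (i) for general $j$ by a separate induction whose step uses the shift trick (setting $m_0'=m_1$, $\ve_i'=\ve_{i+1}$ and tracking how $l$, $L$, $d$, $b$ transform under the shift); part (ii) is then a second induction using (i). You instead push the base-case computation to its natural generality, obtaining $l_k = L_k\cdot f(\ve_k)$ for every $k$, so that periodicity immediately yields $l_{n+j}L_j = l_j L_{n+j}$ with no induction and no re-indexing; this makes (i) and (ii) interderivable at each $j$ and collapses everything into one induction for (ii). What you lose is nothing; what you gain is avoiding the bookkeeping of the primed quantities ($L_n'=L_{n+1}/L_1$ and so on), which is the most error-prone part of the paper's argument. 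The one point worth stating explicitly if you write this up is that $l_k = L_k f(\ve_k)$ holds for $k=0$ by the convention $L_0=1$ (Lemma \ref{pd2:lem1} is only stated for $k\geq 1$), and that the identity $l_{n+j}L_j = l_j L_{n+j}$ is best written multiplicatively as you need no division by $L_j$.
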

	\begin{proof}
		i) By induction.
		In \eqref{eq:lem1}, the only term that
		contributes to $l_n$ is $L_n\cdot m_0$, and
		\begin{align*}
			l_n&= (m_{n,0,1}+m_{n,1,0})/\varepsilon_n +m_{n,0,0}\\
			&= L_n(m_{1,0,1}+m_{1,1,0})/\varepsilon_0 +L_n m_{0,0,0}\\
			&=L_n l_0.
		\end{align*}
		Suppose that
		$$l_{n+j}=L_n^{2^j}l_j.$$
		As in the proof of Lemma \ref{pd2:lem1}, let $m'_0=m_1$ and 
		$\varepsilon_n'=\ve_{n+1}$, then
			$$
				l_{n+j+1}=l_{n+j}'=L_n'^{2^j}l_j'=(L_n\cdot l_n/l_0)^{2^j} l_{j+1}
				=(L_n\cdot L_n l_0/l_0 )^{2^{j}}l_{j+1}
				=L_n^{2^{j+1}}l_{j+1}.
			$$
		ii) By induction. For $j=0$, the identity
		$$L_n=L_n^{2^0}L_0$$
		holds trivially.

		Suppose
		$$L_{n+j}=L_n^{2^j} L_j.$$
		Then
		\begin{equation*}
		L_{n+j+1}=L_{n+j}l_{n+j}=L_n^{2^j}L_jL_n^{2^j}l_j=L_n^{2^{j+1}}L_{j+1}.
		\qedhere
		\end{equation*}
	\end{proof}

	\begin{lem}\label{pd2:lem3}
		The sequence $(m_{kn})_k$ converges.
	\end{lem}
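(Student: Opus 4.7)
The plan is to show $(m_{kn})_k$ is Cauchy with respect to the ultrametric on $2\times 2$ matrices over $\ff_2((A))$ (defined by taking the minimum valuation over entries). Applying Lemma \ref{pd2:lem1} to both $m_{(k+1)n}$ and $m_{kn}$, splitting the sum for $m_{(k+1)n}$ at index $kn$, and using $L_{(k+1)n} = L_n^{2^{kn}} L_{kn}$ from Lemma \ref{pd2:lem2}(ii), one obtains
$$m_{(k+1)n} - m_{kn} = (L_n^{2^{kn}} - 1)\, m_{kn} + \sum_{j=kn}^{(k+1)n - 1} \frac{L_{(k+1)n}}{L_{j+1}} d_j b_j.$$
It suffices to show that both the main term and the tail have valuation tending to $+\infty$.

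For the main term, the characteristic-$2$ Frobenius gives $L_n^{2^{kn}} - 1 = (L_n - 1)^{2^{kn}}$, of valuation $2^{kn}\val(L_n - 1)$. The crucial sub-claim is $\val(L_n - 1) > 0$, i.e.\ $L_n$ has constant term $1$. I would establish this from the identity $m_{j,0,0} = L_j \cdot m_{0,0,0}$ (the $(0,0)$-entry of Lemma \ref{pd2:lem1}, using $(b_j)_{0,0} = 0$) together with an induction showing that $m_{j,0,0}$ always has constant term $1$: the base case holds by direct inspection of $m_0 = B_{l-1}\cdots B_0$, whose $(0,0)$-entry is $1$ plus terms of strictly positive valuation; the inductive step uses the product recursion $m_{j+1,0,0} = m_{j,0,0}\cdot l_j$ together with the observation that $l_j$ shares the constant term of $m_{j,0,0}$ (since $(m_{j,0,1}+m_{j,1,0})/\ve_j$ has valuation at least $2$). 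Since the entries of $m_{kn}$ have valuation bounded below by $0$ (an inductive check using the same structural analysis), the main term's valuation is at least $2^{kn}\val(L_n - 1) \to +\infty$.

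For the tail, the recursion $d_{j+1} = d_j^2/\ve_j^2$ (from $\det(m_{j+1}) = \det(m_j)^2 \det(B_j)$) yields $\val(d_j) = 2^{j+1}(\len(W_0)+1) - 2$, doubly-exponential in $j$. Since each $L_{j+1}$ and $L_{(k+1)n}$ has valuation $0$ (constant term $1$) and $b_i$ has valuation $0$ (its $(1,1)$-entry is $1$), each of the $n$ summands in the tail has valuation at least $\val(d_{kn})$, which tends to $+\infty$. Hence $\val(m_{(k+1)n} - m_{kn}) \to +\infty$, so $(m_{kn})_k$ is Cauchy and convergent by completeness. The main obstacle is the seemingly circular constant-term argument for $L_n - 1$; it is resolved by the clean multiplicativity in the recursion $m_{j+1,0,0} = m_{j,0,0}\,l_j$, which lets the induction on constant terms go through without invoking $L_n$ itself.
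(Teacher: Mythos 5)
Your proof is correct and rests on exactly the same ingredients as the paper's: the factorization of Lemma \ref{pd2:lem1}, the relation $L_{(k+1)n}=L_n^{2^{kn}}L_{kn}$ from Lemma \ref{pd2:lem2}, the bound $\val(L_n-1)>0$ coming from the inductive valuation analysis of the entries of $m_k$ (your multiplicative recursion $m_{j+1,0,0}=m_{j,0,0}\,l_j$ is a nice way to make the paper's ``easy induction'' explicit), and the doubly exponential growth of $\val(d_j)$. The only difference is packaging: the paper proves convergence of the two factors $L_{kn}$ and $m_0+\sum d_jb_j/L_{j+1}$ separately, whereas you combine them into a single Cauchy estimate for $m_{(k+1)n}-m_{kn}$; this is a presentational variant, not a different route.
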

	\begin{proof}
		By Lemma \ref{pd2:lem1},
	$$m_{kn}=L_{kn}\cdot (m_0 + d_0b_0/L_1+\cdots + d_{kn-1}b_{kn-1}/L_{kn}).$$
		To prove the convergence of $(m_{kn})_k$, we only need to prove the 
		convergence of both factors on the right.
    It is easy to prove by induction that for all $k\geq 0$,
		$\val(m_{k,0,0})=0$ and 
		$\val(m_{k,i,j})>0$ for $(i,j)\neq (0,0)$.  From this we deduce that
		$\val(l_k)=0$ for all $k\geq0$, and therefore $\val(L_n)=0$ and
		$\val(1+L_n)\geq 1$.
		By Lemma \ref{pd2:lem2}, 
		$$L_{(k+1)n}=L_{n}^{2^{kn}}L_{kn}$$
		and therefore
		$$\val\left(L_{(k+1)n}-L_{kn}\right)=\val\left(\left(1-L_n^{2^{kn}}\right)
		\cdot L_{kn} \right) \geq 2^{kn}.$$
		This proves the convergence of $(L_{kn})_k$.

		On the other hand, it is easy to prove that $\val(d_j)\geq 2^{2j}$. 
		Therefore $$\val(d_j/L_j)\geq 2^{2j}.$$
		This proves the convergence of 
		$(m_0 + d_0b_0/L_1+\cdots + d_{kn-1}b_{kn-1}/L_{kn})_k$.
	\end{proof}
	\begin{proof}[Proof of Theorem \ref{thm:pd2}]
	Let
		$$f=\lim_{k\rightarrow \infty} L_{kn}
		=\lim_{k\rightarrow\infty} L_n^{2^0+2^n+\cdots 2^{(k-1)n}}.$$
		Then $f$ is algebraic of degree at most $2^n-1$:
	$$f^{2^n} =\lim_{k\rightarrow\infty} L_n^{2^n+2^{2n}+\cdots 2^{kn}} = f/L_n.$$
		By Lemma \ref{pd2:lem2} and taking the limit, we have 
	$$
	\lim_{k\rightarrow \infty} m_{kn}=f\cdot (m_0 +d_0b_0/L_1 +d_1b_1/L_2+\cdots).
	$$
	For $j=0,1,\ldots, n-1$, define 
		$$H_j=\sum_{k=0}^\infty d_{kn+j}/L_{kn+j+1},$$
	then 
		\begin{equation}\label{eq:pd2lim}
	\lim_{k\rightarrow \infty} m_{kn}=f\cdot (m_0 + b_0 H_0 + b_1 H_1 +\cdots +
	b_{n-1} H_{n-1})
		\end{equation}
	We prove easily by induction that 
	$d_{(k+1)n}=d_{kn}^{2^n}\cdot \lambda$ where 
		$\lambda=1/(\ve_0^{2^n}\cdots \ve_{n-1}^{2^1})$. Using this and Lemma \ref{pd2:lem2}, 
	we obtain the algebraicity of $H_0$:
		\begin{equation}\label{eq:pd2H}
	H_0^{2^n}\cdot \lambda/L_1= \frac{d_0^{2^n}\cdot \lambda}{L_1^{2^n} L_1}+
	\frac{d_n^{2^n}\cdot \lambda}{L_{n+1}^{2^n} L_1}+\cdots 
	=\frac{d_n}{L_{n+1}}+ \frac{d_{2n}}{L_{2n+1}}+\cdots=H_0+\frac{d_0}{L_1}.
		\end{equation}
	On the other hand, for $j=1, \ldots, n-1$, 
	$d_{kn+j}/d_{kn}^{2^j}=d_j/d_0^{2^j}.$
	Also, by Lemma \ref{pd2:lem2}, 
	$$\frac{L_{n+j+1}}{L_{n+1}^{2^j}}=
	\frac{L_n^{2^{j+1}}L_{j+1}}{(L_n^2L_{1})^{2^j}}=
	\frac{L_{j+1}}{L_1^{2^j}}, $$
	and by induction,
	$$\frac{L_{kn+j+1}}{L_{kn+1}^{2^j}}=
	\frac{L_{j+1}}{L_1^{2^j}}. $$
	Therefore 
		\begin{equation}\label{eq:pd2Hj}
	H_j=H_0^{2^j}\cdot \frac{d_j}{d_0^{2^j}} \cdot \frac{L_{j+1}}{L_1^{2^j}}.
		\end{equation}
		By \eqref{eq:pd2lim}, \eqref{eq:pd2H}, and \eqref{eq:pd2Hj}, the continued
		fraction
		$$\CF(\mathcal{P}(W_0,\boldsymbol\ve))=\lim_{k\rightarrow \infty} \frac{m_{kn,0,1}}{m_{kn,0,0}}$$
		lives in $\ff_2(A)[H_0]$ and is algebraic of degree at most $2^n$.
	\end{proof}

\section{The family $\mathcal{G}$}\label{sec:pd2sum}

\subsection{Examples}
We give two examples before proving Theorem \ref{thm:pd2sum} (we do not need
the theorem to prove the algebraicity of each example),
so that the reader may get a quick idea of the proof.
\begin{eg}
	Let $u_0$ and $v_0$ be single letters $a$ and $b$. Let $\boldsymbol \Upsilon=
	1^\infty$. Then $\mathcal{G}(u_0,v_0,\boldsymbol\Upsilon)$ is the Thue-Morse
	sequence $\mathbf{t}=abbabaab\ldots$ This example was first proved in 
	\cite{Bugeaud2022H}.
	Define 
	$$m_0=\begin{pmatrix}1&1/a\\1/a&0\end{pmatrix}, \quad w_0=
		\begin{pmatrix}1&1/b\\1/b&0\end{pmatrix}
		$$
		and for $k\geq 0$,
$$m_{k+1}=w_{k}m_{k}\quad w_{k+1}=m_{k}w_{k}.$$	
By \eqref{eq:cfm} and \eqref{eq:cfmn}, 
	$$\CF(\mathbf{t})=\lim_{k} \frac{m_{2k+1,0,1}}{m_{2k+1,0,0}}.$$
	We will show that every entry of $(m_{2k+1})_k$ converges, and the limit is 
	algebraic.
	The key to the proof is the relation
	\begin{equation}\label{eq:tmkey}
		m_{3}=(m_1 + d/co +d^2/r/co^2 )\cdot l,\tag{$\star$}
	\end{equation}
	where 
	$$d=\det(m_1),\; r=\tr(m_1),\; co=m_1+w_1+r,\; l=r\cdot co^2.$$
	In fact, the relation \eqref{eq:tmkey} holds regardless of the particular 
	form of $m_0$ and $w_0$; it remains true if we had taken $m_0$ and $w_0$ 
	to be any $2\times 2$ matrix in characteristic $2$. 
	Therefore, it actually gives us the relation between
	$m_{2k+1}$ and $m_{2k+3}$ for all $k\geq 0$. For example,
	if we define $m_0'=m_2$, and $w_0'=w_2$, and 
	define $m'_k$, $w'_k$, $d'$, $r'$, $co'$, $l'$ accordingly, 
	then by \eqref{eq:tmkey},
	$$m_{3}'=(m_1' + d'/co' +d'^2/r'/co'^2 )\cdot l'.$$
	We verify directly
	$$d'=d^4,\; r'=r\cdot l,\; co'=co\cdot l,\; l'=l^4.$$
	Therefore 
	\begin{align*}
		&\quad \;m_{5}=m_{3}'=(m_3 + d^4/co/l +d^8/r/co^2/l^3 )\cdot l^4\\
		&=l^{1+4}\cdot (m_1+  (d+d^4/l^2)/co + (d^2+d^8/l^4)/r/co^2)
	\end{align*}
	We continue in this way to find
	$$
		m_7=l^{1+4+16}\cdot (m_1+  (d+d^4/l^2+d^4/l^{2+8})/co + 
		(d^2+d^8/l^4+d^{32}/l^{4+16})/r/co^2),
	$$
	etc.
	Define 
	$$f=l^{1+4+16+64+\cdots}$$
	$$H=d+d^4/l^2+d^{16}/l^{2+8}+d^{64}/l^{2+8+32}+\cdots.$$
	It is easy to prove by induction that
	$$ \lim_{k}m_{2k+1}= f\cdot (m_1+H/co+H^2/r/co^2). $$
	Both $f$ and  $H$ are algebraic:
	$$f^4=f/l$$
	$$H^4/l^2=H+d.$$
	Therefore $\CF(\mathbf{t})\in \ff_2(a,b)[H]$, and is algebraic of degree 
	at most $4$.
\end{eg}

\begin{eg}
	Let $u_0=a$, $v_0=b$, $\boldsymbol\Upsilon=(011)^\infty$, 
	$\mathbf{u}=\mathcal{G}(u_0,v_0,\boldsymbol\Upsilon)$.
Define 
	$$m_0=\begin{pmatrix}1&1/a\\1/a&0\end{pmatrix}^2, \quad w_0=
		\begin{pmatrix}1&1/b\\1/b&0\end{pmatrix}^2,
		$$
		and for $k\geq 0$,
$$m_{3k+1}=w_{3k}m_{3k}\quad w_{3k+1}=m_{3k}w_{3k}$$	
$$m_{3k+2}=w_{3k+1}m_{3k+1}\quad w_{3k+2}=m_{3k+1}w_{3k+1}$$	
		$$m_{3k+3}=m_{3k+2}^2\quad w_{3k+3}=w_{3k+2}^2. $$
By \eqref{eq:cfm} and \eqref{eq:cfmn}, 
	$$\CF(\mathbf{u})=\lim_{k} \frac{m_{3k+1,0,1}}{m_{3k+1,0,0}}.$$
	We will show that every entry of $(m_{3k+1})_k$ converges, and the limit is 
	algebraic.
	The key to the proof is the relation
	\begin{equation}\label{eq:eg2key}
		m_4=(m_1+d/co+d^2/co^3+d^4/r/co^6)\cdot l.\tag{$\diamondsuit$}
	\end{equation}
	where 
	$$d=\det(m_1),\; r=\tr(m_1),\; co=m_1+w_1+r,\; l=r\cdot co^6.$$
	As in the previous example, the relation \eqref{eq:eg2key} 
	holds regardless of the particular form of $m_0$ and $w_0$.
	In particular, if we define $m_0'=m_3$, and $w_0'=w_3$, and 
	define $m'_k$, $w'_k$, $d'$, $r'$, $co'$, $l'$ accordingly, 
	then by \eqref{eq:eg2key},
	$$ m_4'=(m_1'+d'/co'+d'^2/co'^3+d'^4/r'/co'^6)\cdot l'.$$
	We verify directly
	$$d'=d^8,\; r'=r\cdot l,\; co'=co\cdot l,\; l'=l^8.$$
	Therefore 
	\begin{align*}
		&\quad \;m_{7}=m_{4}'=(m_4 + d^8/co/l +d^{16}/co^3/l^3+d^{32}/r/co^6/l^7 )
		\cdot l^8\\
		&=l^{1+8}\cdot(m_1 + (d+d^8/l^2)/co+(d^2+d^{16}/l^4)/co^3+
		(d^4+d^{32}/l^8)/r/co^6).
	\end{align*}
	Define
	$$f=l^{1+8+64+512+\cdots}$$
	$$H=d+d^8/l^2+d^{64}/l^{2+16}+d^{512}/l^{2+16+128}+\cdots.$$
	It is easy to prove by induction that
	$$ \lim_{k}m_{3k+1}= f\cdot (m_1+H/co+H^2/co^3+H^4/r/co^6). $$
	Both $f$ and  $H$ are algebraic:
	$$f^8=f/l$$
	$$H^8/l^2=H+d.$$
	Therefore $\CF(\mathbf{u})\in \ff_2(a,b)[H]$, and is algebraic of degree 
	at most $8$.
\end{eg}

\subsection{Proof of Theorem \ref{thm:pd2sum}}
In this subsection we prove Theorem \ref{thm:pd2sum}.
In the examples, we found the relations \eqref{eq:tmkey} and \eqref{eq:eg2key}
by hand and verified them by direct computation.
The key of the proof is to find and prove such relations
 for all sequences in $\mathcal{G}$ (this is done in Proposition 
 \ref{prop:main}); the other steps follow naturally.

Let $u_0$, $v_0$ be two finite non-empty words. Let $\boldsymbol\Upsilon$ 
be a periodic $(0,1)$-sequence of periodc $k$ and containing an even number of 
$1$'s in one period. By replacing $u_0$ and $v_0$ if necessary, we may assume 
that $\boldsymbol\Upsilon$ begins with $1$ (except for the trivial case
where $\boldsymbol\Upsilon= 0^\infty$)

To simplify notation, we choose to identify scalar matrices with scalars.
Let $m_0$, $w_0$ be two $2\times 2$ matrices with entries in a field of
characteristic $2$. Let $m_1=w_0m_0$, $w_1=m_0w_0$.
We define $m_{1s}$ and $w_{1s}$ for all binary word $s$ inductively as follows:
\begin{align*}
	m_{1s0}&=m_{1s}^2\\
	m_{1s1}&=w_{1s}m_{1s}\\
	w_{1s0}&=w_{1s}^2\\
	w_{1s1}&=m_{1s} w_{1s}.
\end{align*}
Define $d=\det(m_1)$, $r=\tr(m_1)$ (the trace of $m_1$), and $co=m_1+w_1+r$. 
We verify directly the following relations that will be used in the proof
of Proposition \ref{prop:main}:
\begin{align*}
	\det(m_1)&=\det(w_1)\\
	\tr(m_1)&=\tr(w_1)\\
	co^2&=\tr(m_{11})  \quad \text{ ($co^2$ is a scalar matrix)}\\
	co\cdot m_1 + m_1\cdot co&= co\cdot (co+r)\\
	co\cdot w_1 + w_1\cdot co&= co\cdot (co+r)\\
	co\cdot m_1 &=w_1\cdot co\\
	co\cdot w_1 &=m_1\cdot co\\
	m_1^2&=r\cdot m_1+d\\
	w_1^2&=r\cdot w_1+d\\
	w_1\cdot m_1&= w_1\cdot co+d\\
	m_1\cdot w_1&= m_1\cdot co+d.
\end{align*}
For all binary word $ s$, define $t(s)$ as the  sum of digits of $s$ modulo $2$.

Let $s$ be an arbitrary non-empty binary word.
For $j<|s|$, define $s(j)$ to be $s_0\ldots s_{j-1}$, the prefix of length
$j$ of $s$.
Define $e(s)$ inductively as follows: 
If $s$ is the empty word, then $e(s)=0$; define $e(s0)=2e(s)+t(s0)$, 
$e(s1)=2e(s)+t(s1)$. 
Define and $c_j(s) = d^{2^{j-1}}/r^{2^j-1-e(s(j))}/co^{e(s(j))}$.
We write $c_j$ for short when there is no ambiguity.


\begin{prop}\label{prop:main}
	Let $s$ be a binary word of length $k\geq 1$.
	If $t(s)=0$, then
\begin{align}
	m_{1s}&=(m_1 +c_1 + \cdots + c_k)\cdot d^{2^{k-1}}/c_k, \label{lem:eg}\\
	w_{1s}&=(w_1 +c_1 + \cdots + c_k)\cdot d^{2^{k-1}}/c_k; \label{lem:eg2}
\end{align}
	if $t(s)=1$, then
\begin{align}
	m_{1s}&=(w_1 +c_1 + \cdots + c_k)\cdot d^{2^{k-1}}/c_k, \label{lem:eg3}\\
	w_{1s}&=(m_1 +c_1 + \cdots + c_k)\cdot d^{2^{k-1}}/c_k.\label{lem:eg4}
\end{align}
\end{prop}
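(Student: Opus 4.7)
The plan is to prove Proposition~\ref{prop:main} by induction on the length $k = |s|$, verifying all four formulas simultaneously. The defining recursions $m_{1s0} = m_{1s}^2$, $w_{1s0} = w_{1s}^2$, $m_{1s1} = w_{1s}m_{1s}$, $w_{1s1} = m_{1s}w_{1s}$ reduce each inductive step to an algebraic identity in the algebra generated by $m_1, w_1, co$ over the scalar subring that contains $r, d, co^2$ and inverses. The base case $k = 1$ is immediate: for $s = 0$ the four claims reduce to $m_1^2 = rm_1 + d$ and $w_1^2 = rw_1 + d$, and for $s = 1$ they reduce to $w_1m_1 = w_1co + d$ and $m_1w_1 = m_1co + d$, all of which appear in the list of relations stated before the proposition.

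For the inductive step I write the hypothesis uniformly as $m_{1s} = (X + C_s)\,E_s$ and $w_{1s} = (Y + C_s)\,E_s$, with $\{X,Y\} = \{m_1,w_1\}$ determined by the parity $t(s)$, $C_s = c_1 + \cdots + c_k$, and $E_s = d^{2^{k-1}}/c_k(s) = r^{2^k - 1 - e(s)}\,co^{e(s)}$. I then split into four subcases indexed by $t(s) \in \{0,1\}$ and $a \in \{0,1\}$, substitute into the corresponding defining recursion, and expand in characteristic $2$. The squaring cases invoke $X^2 = rX + d$; the product cases invoke $YX = Y\cdot co + d$; both require the swap rules $co\cdot m_1 = w_1\cdot co$, $co\cdot w_1 = m_1\cdot co$, and the cross-term identity $co\cdot X + X\cdot co = co^2 + r\cdot co$. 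Combinatorially the recursion $e(sa) = 2e(s) + t(sa)$ yields $E_{sa} \in \{r\,E_s^2,\ co\,E_s^2\}$ according to the subcase, and $C_{sa} = C_s + c_{k+1}(sa)$ since the prefixes of length $\le k$ of $sa$ and $s$ coincide.

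The main obstacle is that the cross terms produced by squaring or multiplying are not individually equal to $c_{k+1}(sa)\,E_{sa}$: they also involve the pieces $C_s^2$ and $XC_s + C_sX$ arising from the fact that $C_s$ does not commute with $m_1, w_1$. To handle this I decompose $C_s = A_s + B_s\cdot co$ with $A_s, B_s$ in the scalar subring (possible because each $c_j$ lies in $\{\text{scalar}\} \cup \text{scalar}\cdot co$, using $co^{-1} = co/co^2$), so that $XC_s + C_sX = B_s(co^2 + r\,co)$ by the commutator identity. The required matching of coefficients in the basis $\{X, 1, co\}$ then reduces the inductive step to a purely scalar equation relating $A_s, B_s, d, r, co^2$, and the new term $c_{k+1}(sa)$, which in turn follows directly from the defining formulas for $c_j$ and the recursion $e(sa) = 2e(s) + t(sa)$.

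An extra subtlety appears when $t(s) = 1$: then $e(s)$ is odd, so $E_s$ contains a factor of the noncommuting matrix $co$ and cannot be pulled past $m_1$ or $w_1$. In that situation I factor $E_s = \lambda\cdot co$ with $\lambda$ scalar, and push the trailing $co$ across the matrix factor using $co\cdot m_1 = w_1\cdot co$ (and its analogue for $w_1$), absorbing the resulting $co^2$ into the scalar factor. This swap is precisely the mechanism that turns $m_1$ into $w_1$ (or vice versa) in the resulting expression, and it explains the parity-dependent flipping of the roles of $m_1$ and $w_1$ between \eqref{lem:eg}--\eqref{lem:eg2} and \eqref{lem:eg3}--\eqref{lem:eg4}, completing the induction.
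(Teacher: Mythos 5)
Your proposal is correct and follows essentially the same route as the paper's proof: induction on $|s|$ via the defining recursions, using the swap rule $co\cdot m_1=w_1\cdot co$ to handle the non-scalar factor when $t(s)=1$, the relations $X^2=rX+d$ and $YX=Y\cdot co+d$, and the commutator identity to evaluate the cross terms, reducing everything to a scalar telescoping identity for the $c_j$. Your decomposition $C_s=A_s+B_s\cdot co$ is just a repackaging of the paper's bookkeeping with the indicators $\delta_j,\bar\delta_j$ (which record exactly whether $c_j$ is scalar or a scalar times $co$), and the final ``purely scalar equation'' you defer to is the paper's chain of identities $d/co+c_1\bar\delta_1(1+r/co)=c_1$ and $c_j^2/co+c_{j+1}\bar\delta_{j+1}(1+r/co)=c_{j+1}$.
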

\begin{proof}
	We prove \eqref{lem:eg3} by induction on the length of $s$, 
	and obtain \eqref{lem:eg4} by symmetry.
	The proofs of \eqref{lem:eg} and \eqref{lem:eg2} are similar.
	The cases $s=0$ and $s=1$ are verified directly.

	Suppose the \eqref{lem:eg} to \eqref{lem:eg4} are true for $s$ with 
	$k=|s|\geq 1$ and $t(s)=1$.
	In the proof, as in the statement of the lemma, $c_j$ alone stands for 
	$c_j(s)$.

	Define $\delta_j = t(s(j))$.
	Define $\bar{\delta}_j=1-\delta_j$, then 
	$$c_1 =\delta_1 d/co+ \bar{\delta}_1d/r$$
and
$$c_{j+1} = c_j^2(\delta_{j+1} /co + \bar{\delta}_{j+1} /r)$$
for $j=1,\ldots, k-1$. 

   As $c_j(s)$ only depends on the first $j$ bits of $s$. We have
	 $c_j(s0)=c_j(s1)=c_j$ for $j=1,\ldots,k$.
	We assume $t(s)=1$, so that  $c_{k+1}(s0)= c_k^2/co$ and  $c_{k+1}(s1)= c_k^2/r$,
	and therefore we need to prove that
	\begin{equation*}
		m_{1s0} =(w_1 + c_1 + c_2+ \cdots + c_k + c_k^2/co)\cdot  d^{2^{k}}/(c_k^2/co)
	\end{equation*}
	and
	\begin{equation*}
		m_{1s1} =(m_1 + c_1 + c_2+ \cdots + c_k + c_k^2/r)\cdot  d^{2^{k}}/(c_k^2/r).
	\end{equation*}

	We now write down the deduction, then we make some remarks on how we get
	from one step to the next. We have

	\begin{align*}
		&\quad \;m_{1s0}=m_{1s}^2\\
		&=(w_1 +c_1 + \cdots + c_k)\cdot d^{2^{j-1}}/c_k\cdot
	1/c_k\cdot (m_1 +c_1 + \cdots + c_k)/d^{2^{j-1}}\\
		&=(w_1 +c_1 + \cdots + c_k)\cdot (m_1 +c_1 + \cdots + c_k)\cdot d^{2^{j}}/c_k^2\\
		&=(w_1\cdot m_1 +(c_1 + \cdots + c_k)^2 + (c_1 \bar{\delta}_1+\cdots + c_k 
		\bar{\delta}_k)\cdot (co+r))\cdot d^{2^{j}}/c_k^2\\
		&=(w_1\cdot co+t +(c_1 + \cdots + c_k)^2 + (c_1 \bar{\delta}_1+\cdots + c_k 
		\bar{\delta}_k)\cdot (co+r))\cdot d^{2^{j}}/c_k^2\\
		&=(w_1+t/co +c_1^2/co + \cdots + c_k^2/co + (c_1 \bar{\delta}_1+\cdots + c_k
		\bar{\delta}_k)\cdot (1+r/co))\cdot d^{2^{j}}/(c_k^2/co)\\
		&=(w_1 + c_1 + c_2+ \cdots + c_k + c_k^2/co)\cdot  d^{2^{j}}/(c_k^2/co).
	\end{align*}

To get from the first line to the second, we notice that, since we are 
	assuming $t(s)=1$, the exponent of $co$ in $c_k$ is odd. Therefore
	the two factors in  the expression of $m_{1s}$ (and $w_{1s}$) do not commute. But we can 
	use the identity
	$$co\cdot m_1 = w_1\cdot co$$
	to obtain
\begin{align*}
	m_{1s}&=1/c_k\cdot (m_1 +c_1 + \cdots + c_k)\cdot d^{2^{k-1}},\\
	w_{1s}&=1/c_k\cdot (w_1 +c_1 + \cdots + c_k)\cdot d^{2^{k-1}}.
\end{align*}

Then we notice that, if the exponent of $co$ in $c_j$ is even, then
	$c_jm_1+w_1c_j$ is equal to $c_j(co+r)$ by definition of $co$, and 
	$c_jm_1+w_1c_j=0$ otherwise.
That is how we get from the third line to the fourth.

To get from the sixth line to the last line, we make use of the following 
relations
	\begin{equation*}
	d/co+c_1\bar{\delta}_1 (1+r/co)=c_1,
	\end{equation*}
	and for $j=1,\cdots, k-1$,
	\begin{equation*}
		c_j^2/co+c_{j+1}\bar{\delta}_{j+1}(1+r/co)=c_{j+1};
	\end{equation*}
	the first one, for example, is proved in this way:
	\begin{align*}
		&\quad\; d/co+c_1\bar{\delta}_1 (1+r/co) \\
		&= d/co+ (\delta_1 d/co+ \bar{\delta}_1d/r)\bar{\delta}_1 (1+r/co) \\
		&= d/co+ \bar{\delta}_1 t/r (1+r/co) \\
		&=\delta_1 d/co+ \bar{\delta}_1 d/r \\
		&=c_1.
	\end{align*}

	Similarly, the following relations are used in the proof of the formula 
	involving $m_{1s1}$:
	\begin{equation*}
		d/r+c_1\delta_1 (co/r+1)=c_1,
	\end{equation*}
	and for $j=1,\cdots, k-1$,
	\begin{equation*}
		c_j^2/r+c_{j+1}{\delta}_{j+1}(co/r+1)=c_{j+1}.
	\end{equation*}
	\begin{align*}
		&\quad \;m_{1s1} =w_{1s}m_{1s}\\
		&=(m_1 +c_1 + \cdots + c_k)\cdot d^{2^{j-1}}/c_k\cdot
	1/c_k\cdot (m_1 +c_1 + \cdots + c_k)/d^{2^{j-1}}\\
		&=(m_1 +c_1 + \cdots + c_k)^2\cdot d^{2^{j}}/c_k^2\\
		&=(m_1^2 + (c_1 + \cdots + c_k)^2 + (c_1{\delta}_1+\cdots + 
		c_k{\delta}_k)\cdot (co+r))\cdot d^{2^{j}}/c_k^2\\
		&=(m_1\cdot r +d + (c_1 + \cdots + c_k)^2 + (c_1{\delta}_1+\cdots + 
		c_k{\delta}_k)\cdot (co+r))\cdot d^{2^{j}}/c_k^2\\
		&=(m_1 +d/r + (c_1 + \cdots + c_k)^2/r + (c_1{\delta}_1+\cdots + 
		c_k{\delta}_k)\cdot (co/r+1))\cdot d^{2^{j}}/(c_k^2/r)\\
		&=(m_1+c_1+c_2+\cdots + c_k + c_k^2/r)\cdot d^{2^{j}}/(c_k^2/r).\qedhere
	\end{align*}
\end{proof}


From this point on, let $s$ be a binary word of length $k$ that contains an 
even number of
$1$'s and ends with $1$. (This is the case if we set $s=\Upsilon_1\ldots
\Upsilon_n$.)

Define
	$$l=d^{2^{k-1}}/c_k=r^{2^k-1-e(s)}\cdot co^{e(s)}.$$
	Since $s$ contains an even number of $1$'s, $e(s)$ is even and $l$ is a 
	scalar.
	Define $m_0'=m_{1s(k-1)}$, and define $m_1'$,
	 $m_{1s}'$ and $r'$, $d'$, $co'$, $c_j'$
	$l'$ accordingly. 
	Because $s$ ends with $1$, we have $m_1'=m_{1s}$ and
\begin{equation*}
	m_{1ss}=m_{1s}'=(m_1' +c_1' + \cdots + c_k')\cdot l'.
\end{equation*}
We verify immediately the following relations:
	\begin{align*}
		d'&=d^{2^k}\\
		r'&=\tr(m_{1s})=l\cdot r \quad (\tr(co)=0, \text{ therefore } \tr(c_j)=0)\\
		co'&=m_1'+w_1'+r'=l\cdot co\\
		l'&= (r')^{2^k-1-e(s)}\cdot (co')^{e(s)}=l^{2^{k}}\\
		c_j'&= (d')^{2^{j-1}}/(r')^{2^j-1-e(s(j))}/(co')^{e(s(j))}
		=d^{(2^k-1)(2^{j-1})}/l^{2^j-1}\cdot c_j
	\end{align*}
	Define $l^{(0)}=l$, $L_0=1$, $c_j^{(0)}=c_j$, and $l^{(i+1)}=(l^{(i)})'$, 
	 $L_{i+1}=L_i\cdot l^{(i)}$, $c_j^{(i+1)}=(c_j^{(i)})'$ for $i\geq 0$.
	 Then $L_{i+1}=L_i^{2^k}\cdot l$.
	 We also define $L_i'$ in the obvious way. We have $L_i'=L_{i+1}/l$.

	 \begin{lem}\label{lem:cjc1}
Let $s$ be a binary string of length $k$ that contains an even number of $1$'s and
ends with $1$.
		 For $j=2,\ldots, k$, for all $q\geq 0$,
		 \begin{equation}\label{eq:cjc1}
		 \frac{c_j^{(q)}/L_q}{(c_1^{(q)}/L_q)^{2^{j-1}}}= \frac{c_j}{c_1^{2^{j-1}}}
		 \end{equation}
	 \end{lem}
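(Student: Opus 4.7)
The plan is to prove the identity by induction on $q$, using the recursion $c_j^{(q+1)} = (c_j^{(q)})'$ together with the explicit formula for $c_j'$ in terms of $c_j$ that was just verified before the lemma statement. Specifically, the key input is
$$c_j' = \frac{d^{(2^k-1)2^{j-1}}}{l^{2^j-1}} \cdot c_j,$$
whose iterated version reads $c_j^{(q+1)} = (d^{(q)})^{(2^k-1)2^{j-1}}/(l^{(q)})^{2^j-1} \cdot c_j^{(q)}$.

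The base case $q=0$ is immediate since $L_0=1$. For the inductive step I would apply the above formula both at index $j$ and at index $1$: for $j=1$ it gives $c_1^{(q+1)} = (d^{(q)})^{2^k-1}/l^{(q)} \cdot c_1^{(q)}$, so raising to the $2^{j-1}$ power yields $(c_1^{(q+1)})^{2^{j-1}} = (d^{(q)})^{(2^k-1)2^{j-1}}/(l^{(q)})^{2^{j-1}} \cdot (c_1^{(q)})^{2^{j-1}}$. Dividing, the $d^{(q)}$-factors cancel exactly and one finds
$$\frac{c_j^{(q+1)}}{(c_1^{(q+1)})^{2^{j-1}}} = \frac{1}{(l^{(q)})^{2^{j-1}-1}} \cdot \frac{c_j^{(q)}}{(c_1^{(q)})^{2^{j-1}}}.$$

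Multiplying both sides by $L_{q+1}^{2^{j-1}-1}$ and using the recursion $L_{q+1}=L_q\cdot l^{(q)}$ (so that $L_{q+1}/l^{(q)} = L_q$), the $l^{(q)}$ factors telescope and the right side becomes $L_q^{2^{j-1}-1} \cdot c_j^{(q)}/(c_1^{(q)})^{2^{j-1}}$, which is the quantity appearing in the inductive hypothesis. Rewriting $\frac{c_j^{(q+1)}/L_{q+1}}{(c_1^{(q+1)}/L_{q+1})^{2^{j-1}}} = L_{q+1}^{2^{j-1}-1}\cdot c_j^{(q+1)}/(c_1^{(q+1)})^{2^{j-1}}$ closes the induction.

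The main obstacle is purely bookkeeping: verifying that the exponents of $d^{(q)}$ in the numerator and denominator coincide (which is why the $2^{j-1}$-power of the $j=1$ case matches the $j$-th case), and that the surviving powers of $l^{(q)}$ combine cleanly with the factor $L_{q+1}^{2^{j-1}-1}/L_q^{2^{j-1}-1} = (l^{(q)})^{2^{j-1}-1}$ to give a perfect cancellation. No new identities about $m_1$, $w_1$, $r$, $co$ are needed; only the previously tabulated transformation rules $d'=d^{2^k}$, $r'=lr$, $co'=l\cdot co$, $l'=l^{2^k}$, and the definition $L_{q+1}=L_q\cdot l^{(q)}$.
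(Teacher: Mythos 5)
Your proof is correct and follows essentially the same route as the paper's: both arguments rest on the single-step transformation rule $c_j'=d^{(2^k-1)2^{j-1}}/l^{2^j-1}\cdot c_j$ (iterated to level $q$ because the whole construction is generic in $m_0,w_0$), cancel the matching powers of $d^{(q)}$ against the $2^{j-1}$-th power of the $j=1$ case, and absorb the leftover $(l^{(q)})^{2^{j-1}-1}$ into the passage from $L_q$ to $L_{q+1}=L_q\,l^{(q)}$ in the induction on $q$. The paper merely packages the same cancellation as the normalized identity $\frac{c_j'/l}{(c_1'/l)^{2^{j-1}}}=\frac{c_j}{c_1^{2^{j-1}}}$ before inducting, so the two write-ups differ only in bookkeeping.
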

	 \begin{proof}
		 \begin{align}\label{eq:cjc1_0}
		\frac{c_j'/l}{(c_1'/l)^{2^{j-1}}}&=
		\frac{d^{(2^k-1)(2^{j-1})}/l^{2^j}\cdot c_j}
		{(d^{(2^k-1)}/l^{2}\cdot c_1)^{2^{j-1}}}=\frac{c_j}{c_1^{2^{j-1}}}.
	\end{align}
	Therefore for $q\geq 0$,
		 \begin{equation}\label{eq:cjc1_1}
		\frac{c_j^{(q+1)}/l^{(q)}}{(c_1^{(q+1)}/l^{(q)})^{2^{j-1}}}=
		\frac{c_j^{(q)}}{(c_1^{(q)})^{2^{j-1}}}.
	\end{equation}
		 We prove \eqref{eq:cjc1} by induction. The case $q=0$ is just
		 \eqref{eq:cjc1_0}. Suppose \eqref{eq:cjc1} holds for $q$. 
		 Using $L_{q+1}=L_ql^{(q)}$, \eqref{eq:cjc1_1}, and the induction 
		 hypothesis, we find
		 \begin{equation*}
			 \quad\;\frac{c_j^{(q+1)}/L_{q+1}}{(c_1^{(q+1)}/L_{q+1})^{2^{j-1}}}
			 =\frac{c_j^{(q+1)}/L_{q}/l^{(q)}}{(c_1^{(q+1)}/L_{q}/l^{(q)})^{2^{j-1}}}
			 =\frac{c_j^{(q)}/L_q}{(c_1^{(q)}/L_q)^{2^{j-1}}}
			 = \frac{c_j}{c_1^{2^{j-1}}}.\qedhere
		 \end{equation*}
	 \end{proof}

	 \begin{lem}\label{lem:c1qc1}
Let $s$ be a binary string of length $k$ that contains an even number of $1$'s and
ends with $1$.
		 For all $q\geq 0$, 
		 \begin{equation}\label{eq:c1qc1}
			 \frac{c_1^{(q+1)}/L_{q+1}}{(c_1^{(q)}/L_{q})^{2^k}}
			 = \frac{c_1'/l}{(c_1)^{2^k}}.
		 \end{equation}
	 \end{lem}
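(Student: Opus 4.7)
The plan is to proceed by induction on $q$, mirroring the structure of the proof of Lemma \ref{lem:cjc1}. For the base case $q=0$, the identities $L_0=1$ and $L_1=l^{(0)}=l$ make both sides of \eqref{eq:c1qc1} equal to $(c_1'/l)/c_1^{2^k}$ on the nose.

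For the inductive step, I would first specialize the formula $c_j'=d^{(2^k-1)(2^{j-1})}/l^{2^j-1}\cdot c_j$ (recorded just before Lemma \ref{lem:cjc1}) to $j=1$, which gives $c_1' = (d^{2^k-1}/l)\cdot c_1$. Applying the $'$-operation at level $q$, together with the expressions $d^{(q)} = d^{2^{qk}}$ and $l^{(q)} = l^{2^{qk}}$ coming from $d'=d^{2^k}$ and $l'=l^{2^k}$, this promotes to
\[
c_1^{(q+1)} = \frac{(d^{(q)})^{2^k-1}}{l^{(q)}}\cdot c_1^{(q)}.
\]
Combining this recursion with the identity $L_{q+1} = L_q^{2^k}\cdot l$ already noted in the text, the left-hand side of \eqref{eq:c1qc1} simplifies to $c_1^{(q+1)}/(l\cdot (c_1^{(q)})^{2^k})$, and the claim reduces to showing that $c_1^{(q+1)}/(c_1^{(q)})^{2^k}$ is independent of $q$. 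A short calculation — iterating the recursion to obtain the closed form $c_1^{(q)} = c_1\cdot d^{2^{qk}-1}/L_q$, then using the exponent identity $2^{(q+1)k}-1 - 2^k(2^{qk}-1) = 2^k-1$ together with $L_{q+1}=L_q^{2^k} l$ to cancel powers of $d$ and $L_q$ — evaluates this ratio to $d^{2^k-1}/(l\cdot c_1^{2^k-1})$ for every $q$, which matches its value at $q=0$, namely $c_1'/c_1^{2^k}$. Dividing by $l$ recovers the right-hand side of \eqref{eq:c1qc1}.

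The main obstacle, as in the previous lemma, is purely bookkeeping: tracking the interplay of the exponents of $d$, $l$, and $L_q$ as one passes between levels. The identity $L_{q+1}=L_q^{2^k}\cdot l$ is the lynchpin that makes everything telescope; once it is in hand, the remaining algebra is essentially forced and the induction closes without additional ingredients beyond those already used to prove Lemma \ref{lem:cjc1}.
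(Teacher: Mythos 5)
Your proposal is correct and follows essentially the same route as the paper: both arguments reduce the claim, via $L_{q+1}=L_q^{2^k}\cdot l$, to showing that $c_1^{(q+1)}/(c_1^{(q)})^{2^k}$ is independent of $q$, and both establish this by promoting the base-level relation for $c_1'$ to every level using the structural recursions $d'=d^{2^k}$, $l'=l^{2^k}$ (the paper phrases the invariance in terms of $r'=lr$ and $co'=lco$, you in terms of the recorded formula $c_1'=(d^{2^k-1}/l)\cdot c_1$, which is only a cosmetic difference). The bookkeeping in your closed form $c_1^{(q)}=c_1\cdot d^{2^{qk}-1}/L_q$ and the exponent identity check out.
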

	 \begin{proof}
		 Let $e_1=e(s(1))$.
		 $$\frac{c_1'}{c_1^{2^k}}=
		 \frac{d^{2^k}/r^{1-e_1}/co^{e_1}/l}{(d/r^{1-e_1}/co^{e_1})^{2^k}}
		 =r^{(2^k-1)(1-e_1)}co^{(2^k-1)e_1}/l.
		 $$
		 Therefore
		 $$\frac{c_1''}{(c_1')^{2^k}}
		 =(r')^{(2^k-1)(1-e_1)}(co')^{(2^k-1)e_1}/l'
		 =r^{(2^k-1)(1-e_1)}co^{(2^k-1)e_1}/l
		 = \frac{c_1'}{c_1^{2^k}},
		 $$
		 and by induction, 
		 $$\frac{c_1^{(q+1)}}{(c_1^{(q)})^{2^k}}
		 = \frac{c_1'}{c_1^{2^k}}.
		 $$

		 \begin{equation*}
			 \frac{c_1^{(q+1)}/L_{q+1}}{(c_1^{(q)}/L_{q})^{2^k}}
			 = \frac{c_1^{(q+1)}}{(c_1^{(q)})^{2^k}}\cdot \frac{L_q^{2^k}}{L_{q+1}}
			 =\frac{c_1'}{c_1^{2^k}}\cdot \frac{1}{l}. \qedhere
		 \end{equation*}
	 \end{proof}
\begin{lem}\label{lem:sk}
Let $s$ be a binary string of length $k$ that contains an even number of $1$'s and
ends with $1$.
	For all $i\geq 1$,
	\begin{equation}\label{eq:grec}
		m_{1s^i}=L_i \cdot (m_1+\sum_{q=0}^{i-1}\sum_{j=1}^k c_j^{(q)}/L_q).
	\end{equation}
\end{lem}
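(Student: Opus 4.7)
The plan is to proceed by induction on $i$, exploiting the self-similar structure already encoded in the primed notation introduced before the lemma.

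For the base case $i=1$, the identity reduces to $m_{1s} = L_1 \cdot (m_1 + \sum_{j=1}^k c_j^{(0)}/L_0) = l \cdot (m_1 + \sum_{j=1}^k c_j)$, since $L_0 = 1$ and $L_1 = L_0 \cdot l^{(0)} = l$. Because $s$ contains an even number of $1$'s, $t(s) = 0$, so Proposition \ref{prop:main} (equation \eqref{lem:eg}) gives exactly $m_{1s} = (m_1 + c_1 + \cdots + c_k) \cdot d^{2^{k-1}}/c_k$, and by definition $l = d^{2^{k-1}}/c_k$. Done.

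For the inductive step, I would use the key observation that the construction behaves self-similarly when the starting pair $(m_0, w_0)$ is replaced by $(m_{1s(k-1)}, w_{1s(k-1)})$: under that substitution, because $s$ ends in $1$, one has $m_1' = m_{1s}$ and more generally $m_{1s^{i}}' = m_{1s^{i+1}}$. Applying the induction hypothesis in the primed world yields
\begin{equation*}
	m_{1s^{i+1}} = L_i' \cdot \Bigl(m_1' + \sum_{q=0}^{i-1}\sum_{j=1}^k (c_j^{(q)})'/L_q'\Bigr).
\end{equation*}
Now I would substitute the translations $L_i' = L_{i+1}/l$, $L_q' = L_{q+1}/l$, and $(c_j^{(q)})' = c_j^{(q+1)}$ (the last by definition of the iterated prime). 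The $1/l$ factors in $L_q'$ combine with the overall $L_i' = L_{i+1}/l$ to leave $L_{i+1}$ outside and shift the summation index from $q$ to $q+1$, giving
\begin{equation*}
	m_{1s^{i+1}} = (L_{i+1}/l)\cdot m_{1s} + L_{i+1}\cdot \sum_{q=1}^{i}\sum_{j=1}^k c_j^{(q)}/L_q.
\end{equation*}

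Finally, I would insert the base-case identity $m_{1s} = l\cdot(m_1 + \sum_{j=1}^k c_j^{(0)}/L_0)$, which turns $(L_{i+1}/l)\cdot m_{1s}$ into $L_{i+1}\cdot (m_1 + \sum_{j=1}^k c_j^{(0)}/L_0)$; merging this with the tail sum gives the desired formula at $i+1$. The main bookkeeping hurdle is verifying that the primed iterates $(c_j^{(q)})'$ and $L_q'$ match $c_j^{(q+1)}$ and $L_{q+1}/l$ respectively, but these are immediate from the recursive definitions $L_{i+1} = L_i \cdot l^{(i)}$ and $l^{(i+1)} = (l^{(i)})'$ together with $L_{q+1} = L_q \cdot l^{(q)}$ and $l^{(0)} = l$; no further calculation is required.
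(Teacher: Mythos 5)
Your proof is correct and follows essentially the same route as the paper: induction on $i$, with the base case given by Proposition \ref{prop:main} (using $t(s)=0$ and $l=d^{2^{k-1}}/c_k$), and the inductive step obtained by applying the induction hypothesis in the primed world and substituting $m_1'=m_{1s}$, $(c_j^{(q)})'=c_j^{(q+1)}$, $L_q'=L_{q+1}/l$. No gaps.
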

\begin{proof}
	When $i=1$, this is just Proposition \ref{prop:main}. Suppose \eqref{eq:grec}
	is true for $i$, then
	\begin{align*}
		&\quad \;	m_{1s^{i+1}}=m_{1s^i}'\\
		&=L_i' \cdot (m_1'+\sum_{q=0}^{i-1}\sum_{j=1}^k (c_j^{(q)})'/L_q')\\
		&=L_{i+1}/l \cdot ( (m_1+\sum_{j=1}^k c_j)\cdot l 
		+\sum_{q=0}^{i-1}\sum_{j=1}^k c_j^{(q+1)}/(L_{q+1}/l))\\
		&=L_{i+1} \cdot ( (m_1+\sum_{j=1}^k c_j  
		+\sum_{q=0}^{i-1}\sum_{j=1}^k c_j^{(q+1)}/(L_{q+1}))\\
		&=L_{i+1} \cdot ( (m_1+ 
		+\sum_{q=0}^{i}\sum_{j=1}^k c_j^{(q)}/(L_{q})).\qedhere
	\end{align*}
\end{proof}


Up to now, we only assumed $m_0$ and $w_0$ to be any $2\times 2$ matrix. 
From this point on, we assume
\begin{equation*}
m_0=
	\begin{pmatrix} 1& \frac{1}{u_{|u|-1}}\\ \frac{1}{u_{|u|-1}}&0\end{pmatrix}
		\begin{pmatrix} 1& \frac{1}{u_{|u|-2}}\\ \frac{1}{u_{|u|-2}}&0\end{pmatrix}
	\cdots
\begin{pmatrix} 1& \frac{1}{u_{0}}\\ \frac{1}{u_{0}}&0\end{pmatrix},
\end{equation*}
\begin{equation*}
w_0=
	\begin{pmatrix} 1& \frac{1}{v_{|u|-1}}\\ \frac{1}{v_{|u|-1}}&0\end{pmatrix}
		\begin{pmatrix} 1& \frac{1}{v_{|u|-2}}\\ \frac{1}{v_{|u|-2}}&0\end{pmatrix}
	\cdots
\begin{pmatrix} 1& \frac{1}{v_{0}}\\ \frac{1}{v_{0}}&0\end{pmatrix},
\end{equation*}
and $$s=\Upsilon_1\cdots \Upsilon_k.$$ 
By \eqref{eq:cfm} and \eqref{eq:cfmn}, 
$$\CF(\mathcal{G}(u_0,v_0,\boldsymbol \Upsilon))=\lim_{q\rightarrow\infty} 
\frac{m_{1s^q,0,1}}{m_{1s^q,0,0}}.$$
\begin{lem}
	With the above definition of $m_0$, $w_0$, and $s$, 
	the sequence  $(m_{1s^q})_q$ is convergent.
\end{lem}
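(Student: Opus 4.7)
The plan is to apply Lemma \ref{lem:sk}, which writes
\[
m_{1s^q} = L_q\cdot\bigl(m_1 + T_q\bigr),\qquad T_q := \sum_{i=0}^{q-1}\sum_{j=1}^k c_j^{(i)}/L_i,
\]
and reduce the problem to showing separately that the scalar sequence $(L_q)_q$ and the matrix sequence $(T_q)_q$ each converge. Convergence of the product then drops out of the telescoping identity
\[
m_{1s^{q+1}}-m_{1s^q} = (L_{q+1}-L_q)(m_1+T_q) + L_{q+1}(T_{q+1}-T_q),
\]
once one checks that $L_{q+1}$ and $m_1+T_q$ have bounded valuation.

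The first task is to pin down valuation estimates, in the same spirit as in the proof of Lemma \ref{pd2:lem3}. Using the explicit product form of $m_0$ and $w_0$, I would show by induction on $|u_0|$ and $|v_0|$ that $\val(m_{0,0,0})=\val(w_{0,0,0})=0$ with leading term $1$, while all other entries have strictly positive valuation. From this one reads off that $\val(d)>0$, that the scalar $r$ has leading term $1$, and that the diagonal entries of $co$ have leading term $1$; combined with $\tr(co)=0$ this gives that $co^2$ (which is a scalar by the identities listed before Proposition \ref{prop:main}) also has leading term $1$. Because $s$ contains an even number of $1$'s, $l = r^{2^k-1-e(s)}(co^2)^{e(s)/2}$ is then a scalar with $\val(l-1)>0$, and consequently $\val(L_q)=0$ for every $q$.

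With these in hand the convergence of $L_q$ is immediate from
\[
L_{q+1}-L_q = L_q\bigl(l^{2^{qk}}-1\bigr) = L_q(l-1)^{2^{qk}}
\]
(in characteristic $2$), whose valuation grows like $2^{qk}\val(l-1)\to\infty$. For the convergence of $T_q$, I would recast Lemma \ref{lem:c1qc1} as the scalar recursion $a_{q+1} = c\cdot a_q^{2^k}$, where $a_q := c_1^{(q)}/L_q$ and $c := (c_1'/l)/c_1^{2^k}$; a direct computation using the update rules $d'=d^{2^k}$, $r'=l r$, $co'=l\,co$, $l'=l^{2^k}$ shows $\val(c)=0$, while $\val(a_0)=\val(c_1)=\val(d)>0$. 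Iterating gives $\val(a_q)\ge 2^{qk}\val(c_1)\to\infty$, and Lemma \ref{lem:cjc1} then promotes this to $\val(c_j^{(q)}/L_q) = 2^{j-1}\val(a_q)+O(1)\to\infty$ for every $j=1,\ldots,k$. Hence $T_{q+1}-T_q\to 0$ in valuation and $(T_q)_q$ is Cauchy.

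The main obstacle in this outline is really only the first step: cleanly extracting the valuation bookkeeping for $d$, $r$, $co$ and $l$ from the concrete product-of-elementary-matrices form of $m_0$ and $w_0$. Everything afterwards is a short manipulation of the recursive identities already packaged in Lemmas \ref{lem:cjc1}, \ref{lem:c1qc1} and \ref{lem:sk}, together with the fact that taking $2^k$-th powers in characteristic $2$ turns ``close to $1$'' into ``extremely close to $1$''.
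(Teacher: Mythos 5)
Your proposal is correct and follows essentially the same route as the paper: decompose $m_{1s^q}$ via Lemma \ref{lem:sk}, establish the valuation facts ($\val(r)=\val(co^2)=\val(l)=0$, $\val(d)>0$), get convergence of $(L_q)_q$ from $\val(L_{q+1}-L_q)\geq 2^{qk}$, and use Lemmas \ref{lem:cjc1} and \ref{lem:c1qc1} to show $\val(c_j^{(q)}/L_q)\to\infty$. The extra details you supply (the telescoping identity and the explicit recursion $a_{q+1}=c\,a_q^{2^k}$) are just a more explicit rendering of the same argument.
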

\begin{proof}
	We use Lemma \ref{lem:sk}.
	We only need to
	prove the convergence of both factors on the right of \eqref{eq:grec}.
	The following facts are easy to prove:
	$$\val (m_{1,0,0})=0$$
	$$\val (m_{1,0,1}), \val (m_{1,1,0}), \val (m_{1,1,1})>0$$
	$$\val (co_{0,0})=\val (co_{1,1})=0$$
	$$\val (co_{0,1}),\val (co_{1,0})>0$$
	$$\val (r)=0$$
	$$\val (co^2)=0$$
	$$\val (d)>0$$
	$$\val ((1/co)_{0,0})=\val ((1/co)_{1,1})=0$$
	$$\val ((1/co)_{0,1}),\val ((1/co)_{1,0})>0.$$
	Therefore $\val(l)=0$, and $\val(l^{(i)})=0$ and $\val(L_i)=0$ by induction.
	And 
	\begin{equation}\label{eq:Lval}
		\val(L_{i+1}-L_i)= \val(L_i(1+l^{(i)}))=\val(l^{(i)}-1)=\val(l^{2^{ik}}-1)
		\geq 2^{ik}.
	\end{equation}
	Therefore $(L_i)_i$ converges.
	On the other hand, from Lemma \ref{lem:cjc1} and \ref{lem:c1qc1}, 
	using the facts listed above, we deduce that the valuation of each entry of 
	$c_j^{(q)}/L_q$ tends to infinity as $q$ goes up to infinity.
	\end{proof}

\begin{proof}[Proof of Theorem \ref{thm:pd2sum}]
	Let $f=\lim_i L_i$.
	Then $f$ is algebraic of degree at most $2^k$:
	$$f^{2^k}= \lim_i (l^{(0)}\cdots l^{(i-1)})^{2^k}= 
	\lim_i (l^{(1)}\cdots l^{(i)})=f/l.$$
	Let $H_j=\sum_{q=0}^{\infty}c_j^{(q)}/L_q$.
	Then
	$$
		\lim_{i\rightarrow \infty}m_{1s^i}=f\cdot (m_1+\sum_{j=1}^k 
		H_j).
		$$
	By Lemma \ref{lem:cjc1}, 
	$$H_j=H_1^{2^{j-1}}\cdot \frac{c_j}{c_1^{2^{j-1}}}$$
	For $j=1,\cdots, k$.
	Therefore
	$$
		\lim_{i\rightarrow \infty}m_{1s^i}=f\cdot (m_1+\sum_{j=1}^k 
H_1^{2^{j-1}}\cdot \frac{c_j}{c_1^{2^{j-1}}}.
		).
		$$
By Lemma \ref{lem:c1qc1}, 
	\begin{equation}\label{eq:H}
	H_1^{2^k}\cdot \frac{c_1'/l}{(c_1)^{2^k}}= c_1+H_1.
	\end{equation}
	If $e(s(1))=0$, then $H_1$ is a scalar matrix, and
	$$\CF(\mathcal{G}(u_0,v_0,\boldsymbol\Upsilon))\in \ff_2(A)[H_1],$$
	and is algbraic of degree at most $2^k$ over $\ff_2(A)$. 

		If $e(s(1))=1$, then for all $q$, the exponent of $co$ in $c_1^{(q)}$ is odd,
		and $H:=H_1/co$ is a scalar matrix. Insert $H_1=H\cdot co$ into 
		\eqref{eq:H} and we get
	$$
	H^{2^k}\cdot co^{2^k}\cdot \frac{c_1'/l}{(c_1)^{2^k}}= c_1+H\cdot co.
	$$
	Dividing both sides by $co$, we get an equation where the coefficients of
	$H^{2^k}$, $H$, and $H^0$ are all scalars.
	$$
	H^{2^k}\cdot co^{2^k}\cdot \frac{c_1'/co/l}{(c_1)^{2^k}}= c_1/co+H.
	$$
	Therefore $$\CF(\mathcal{G}(u_0,v_0,\boldsymbol\Upsilon))\in \ff_2(A)[H],$$
	and is algbraic of degree at most $2^k$ over $\ff_2(A)$. 
\end{proof}

\subsection{Sums of sequences from $\mathcal{P}$ and $\mathcal{G}$}\label{ssec:pattern}
\begin{prop}\label{prop:sump}
	Let $\mathbf{s}=\mathcal{P}(W_0,\boldsymbol\ve)$ be a binary sequence 
	in $\mathcal{P}$.
	Let $u_0=\sigma(W_{1})$, $v_0=\overline{u_0}$ (the bitwise negation of $u_0$), 
  then 
	\begin{equation*}
	\sigma(\mathbf{s})=\mathcal{G}(u_0,v_0,(1-\delta_{\ve_k,\ve_{k+1}})_k).\qedhere
	\end{equation*}
\end{prop}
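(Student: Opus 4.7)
The strategy is to realize the $\mathcal{G}$-recursion at the level of cumulative-sum words of the $W_n$. For a finite binary word $W$, interpret $\sigma(W)$ as the length-$(|W|+1)$ word of partial sums $0,\,W_0,\,W_0+W_1,\ldots$ (mod $2$); then the last letter of $\sigma(W)$ is the parity $s(W):=\sum_i W_i\pmod 2$, each $\sigma(W_n)$ is a prefix of the infinite sequence $\sigma(\mathbf{s})$, and $\sigma(\mathbf{s})=\lim_n\sigma(W_n)$.

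The crux is a self-similar identity. Comparing partial sums on both sides of $W_{n+1}=W_n\ve_n W_n$ gives
\[
\sigma(W_{n+1})\;=\;\sigma(W_n)\cdot\bigl(X_n+\sigma(W_n)\bigr),\qquad X_n:=s(W_n)+\ve_n,
\]
where the constant $X_n$ is added letterwise, so the second factor is $\sigma(W_n)$ if $X_n=0$ and $\overline{\sigma(W_n)}$ if $X_n=1$. The parity is immediate: in characteristic $2$ the relation $W_{n+1}=W_n\ve_n W_n$ yields $s(W_{n+1})=\ve_n$, so for every $n\geq 0$,
\[
X_{n+1}=s(W_{n+1})+\ve_{n+1}=\ve_n+\ve_{n+1}=1-\delta_{\ve_n,\ve_{n+1}}=\Upsilon_n.
\]

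Set $u_n:=\sigma(W_{n+1})$ and $v_n:=\overline{u_n}$; then $u_0=\sigma(W_1)$ and $v_0=\overline{\sigma(W_1)}$ agree with the statement, and applying the identity above with $n$ replaced by $n+1$ gives
\[
u_{n+1}=u_n\cdot(\Upsilon_n+u_n)=\begin{cases} u_n u_n & \Upsilon_n=0,\\ u_n v_n & \Upsilon_n=1.\end{cases}
\]
Taking bitwise complements, $v_{n+1}=\overline{u_{n+1}}$ equals $v_n v_n$ or $v_n u_n$ respectively, using $\overline{u_n v_n}=v_n u_n$. Hence $(u_n,v_n)$ is precisely the $\mathcal{G}$-sequence with parameters $(u_0,v_0,\boldsymbol\Upsilon)$, and passing to the limit yields $\mathcal{G}(u_0,v_0,\boldsymbol\Upsilon)=\lim_n u_n=\lim_n\sigma(W_{n+1})=\sigma(\mathbf{s})$.

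The only non-routine step is the self-similar identity of the first display; it comes from the observation that continuing the cumulative sum past the middle letter $\ve_n$ in $W_n\ve_n W_n$ shifts all subsequent values by the constant $s(W_n)+\ve_n$, so the second half is a constant letterwise shift of the first half. Everything else is notational bookkeeping.
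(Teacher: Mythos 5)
Your proof is correct and follows essentially the same route as the paper's: both rest on the observation that in $W_{n+1}=W_n\,\ve_n\,W_n$ the partial sums past the middle letter are shifted letterwise by the constant $s(W_n)+\ve_n$, that $s(W_{n+1})=\ve_n$ in characteristic $2$, and hence that $\sigma(W_{n+2})$ is $\sigma(W_{n+1}),\sigma(W_{n+1})$ or $\sigma(W_{n+1}),\overline{\sigma(W_{n+1})}$ according to whether $\ve_{n+1}=\ve_n$. Your write-up is if anything slightly more explicit than the paper's about the length convention for $\sigma$ of a finite word and about closing the recursion for $v_n$ via $\overline{u_nv_n}=v_nu_n$.
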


\begin{proof}
	The sequence $\bf s$ is the limit of $(W_k)_k$ defined by 
$$W_{k+1}= W_k \varepsilon_k W_k.$$ 
If the last bit of $\sigma(W_k)$ and $\epsilon_k$ have the same parity, then
$$\sigma(W_{k+1})=\sigma(W_k),\sigma(W_k);$$
otherwise
$$\sigma(W_{k+1})=\sigma(W_k),\overline{\sigma(W_k)}.$$
The last bit of $\sigma(W_{k+1})$ is always the parity of $\varepsilon_k$.
Therefore 
	$$\sigma(W_{k+2})=\sigma(W_{k+1}),\sigma(W_{k+1})$$
	if $\ve_{k+1}=\ve_k $, and 
	$$\sigma(W_{k+2})=\sigma(W_{k+1}),\overline{\sigma(W_{k+1})}$$
	otherwise.
\end{proof}

\begin{prop}\label{prop:sumg}
	Let $\mathbf{u}=\mathcal{G}(u_0,v_0,\boldsymbol\Upsilon)$ be a binary sequence in 
	$\mathcal{G}$. Then $\sigma(\mathbf{u})=\mathcal{G}(u_2, v_2, (\Upsilon_{2+k})_k)$.
\end{prop}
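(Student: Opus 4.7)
The plan is to follow how $\sigma$ interacts with the $\mathcal{G}$-recursion. The key identity is that for any two finite binary words $x, y$,
\[
\sigma(xy) = \sigma(x),\; P(x) + \sigma(y),
\]
where $P(x) \in \{0,1\}$ denotes the parity of the digit sum of $x$. Concatenation therefore commutes with $\sigma$ precisely when $x$ has even weight, and the entire argument reduces to showing that $u_n$ and $v_n$ both have even weight once $n$ is large enough.

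The main lemma I would establish is that $P(u_n) = P(v_n) = 0$ for every $n \geq 2$. A short induction on the two recursion rules handles this: if $\Upsilon_n = 0$, doubling immediately gives $P(u_{n+1}) = 0 = P(v_{n+1})$; if $\Upsilon_n = 1$, the rule gives $P(u_{n+1}) = P(u_n) + P(v_n) = P(v_{n+1})$. Applied at $n = 0$, this forces $P(u_1) = P(v_1)$ regardless of $\Upsilon_0$; applied again at $n = 1$, this common value must be $0$ (if $\Upsilon_1 = 1$ one gets $P(u_1) + P(v_1) = 0$, and if $\Upsilon_1 = 0$ doubling gives $0$). The parities then stay at $0$ for all larger $n$ by the same case analysis.

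With this vanishing in hand, applying $\sigma$ to the two $\mathcal{G}$-recursion rules for $n \geq 2$ yields word-for-word the same recursion for the pair $(\sigma(u_n), \sigma(v_n))$, driven by the shifted sequence $(\Upsilon_{2+k})_k$ and seeded by $\sigma(u_2), \sigma(v_2)$. Continuity of $\sigma$ on one-sided infinite words (each output letter depends on only finitely many input letters) then gives
\[
\sigma(\mathbf{u}) = \lim_n \sigma(u_n) = \mathcal{G}(\sigma(u_2), \sigma(v_2), (\Upsilon_{2+k})_k),
\]
which is the proposition (interpreting the $u_2, v_2$ on the right-hand side as the initial data $\sigma(u_2), \sigma(v_2)$ of the new $\mathcal{G}$-system). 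The only substantive step is the parity calculation; the rest is bookkeeping with the displayed concatenation identity, and the shift by $2$ in the index of $\boldsymbol\Upsilon$ is exactly the delay needed for both parities to stabilize at $0$.
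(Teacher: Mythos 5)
Your proposal is correct and follows essentially the same route as the paper: the concatenation identity for $\sigma$ governed by the parity of the left factor, plus the observation that $u_n$ and $v_n$ have even weight for all $n\ge 2$, after which the $\mathcal{G}$-recursion transfers verbatim with the pattern shifted by $2$. You even supply the short parity induction that the paper merely asserts (``for example $N=2$''), and you correctly read the $u_2,v_2$ in the statement as the new initial data $\sigma(u_2),\sigma(v_2)$.
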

\begin{proof}
	For a non-empty word $a$, let $a^*$ denote the word obtained from $a$
	by deleting the last bit.
	For two binary words $a$ and $b$, we have
	$$\sigma(a,b)^*=\sigma(a)^*,\sigma(b)^*$$
	if the last bit of $\sigma(a)$ is $0$, that is, if $a$ contains
	an even number of $1$'s, and
	$$\sigma(a,b)^*=\sigma(a)^*,\overline{\sigma(b)^*}$$
	otherwise.
	
	Let $N$ be an integer for which both $u_N$ and $v_N$ contains an even number 
	of $1$'s (for example $N=2$).  Then for all $k\geq 0$,
	both $u_{N+k}$ and $v_{N+k}$ contains an even number of $1$'s.
	Define $u'_k=\sigma(u_{k+N})^*$, $v'_k=\sigma(v_{k+N})^*$. 
	 If $\Upsilon_{k+N}=0$,
	$$u'_{k+1}=\sigma(u_{k+N+1})^*= \sigma(u_{k+N},u_{k+N})^*=
	\sigma(u_{k+N})^*,\sigma(u_{k+N})^* =u_k',u_k',  $$
	similarly $v'_{k+1}=v_k',v_k'$;
	 if $\Upsilon_{k+N}=1$, $u'_{k+1}=u_k',v_k'$, and $v'_{k+1}=v_k',u_k'$.
\end{proof}

Corollary \ref{thm:coro} is an immediate consequence of Theorem \ref{thm:pd2sum}, Proposition \ref{prop:sump} and \ref{prop:sumg}, noticing that if 
$\boldsymbol\ve$ is a binary periodic sequence of period $n$ that contains $1$,
then $(\delta_{\ve_k,\ve_{k+1}})_k$ is periodic of period $n$ and contains
an even number of $1$'s in a period.

\section{Acknowledgement}
The author would like to thank Jean-Paul Allouche, Alain Lasjaunias
and Zhi-Ying Wen for helpful suggestions. 

\bibliographystyle{plain}

\bibliography{article}
\end{document}